\DeclareRobustCommand*\cal{\@fontswitch\relax\mathcal}
\newtheorem{theorem}{Theorem}
\newtheorem{TextAlgorithm}[theorem]{Algorithm}
\newtheorem{axiom}[theorem]{Axiom}
\newtheorem{conjecture}[theorem]{Conjecture}
\newtheorem{corollary}[theorem]{Corollary}
\newtheorem{definition}[theorem]{Definition}
\newtheorem{example}[theorem]{Example}
\newtheorem{exercise}[theorem]{Exercise}
\newtheorem{lemma}[theorem]{Lemma}
\newtheorem{proposition}[theorem]{Proposition}
\newtheorem{remark}[theorem]{Remark}
\theoremstyle{remark}
\newtheorem{Nremark}{Numerical Remark}
\theoremstyle{remark}
\chardef\@x10\chardef\@xv60
\def\tcitime{
\def\@time{%
  \@minute\time\@hour\@minute\divide\@hour\@xv
  \ifnum\@hour<\@x 0\fi\the\@hour:%
  \multiply\@hour\@xv\advance\@minute-\@hour
  \ifnum\@minute<\@x 0\fi\the\@minute
  }}%
\def\x@hyperref#1#2#3{%
   % Turn off various catcodes before reading parameter 4
   \catcode`\~ = 12
   \catcode`\$ = 12
   \catcode`\_ = 12
   \catcode`\# = 12
   \catcode`\& = 12
   \y@hyperref{#1}{#2}{#3}%
}
\def\y@hyperref#1#2#3#4{%
   #2\ref{#4}#3
   \catcode`\~ = 13
   \catcode`\$ = 3
   \catcode`\_ = 8
   \catcode`\# = 6
   \catcode`\& = 4
}
\def\QCTOpt[#1]#2{%
  \def\QCTOptB{#1}
  \def\QCTOptA{#2}
}
\def\QCTNOpt#1{%
  \def\QCTOptA{#1}
  \let\QCTOptB\empty
}
\def\Qct{%
  \@ifnextchar[{%
    \QCTOpt}{\QCTNOpt}
}
\def\QCBOpt[#1]#2{%
  \def\QCBOptB{#1}%
  \def\QCBOptA{#2}%
}
\def\QCBNOpt#1{%
  \def\QCBOptA{#1}%
  \let\QCBOptB\empty
}
\def\Qcb{%
  \@ifnextchar[{%
    \QCBOpt}{\QCBNOpt}%
}
\def\PrepCapArgs{%
  \ifx\QCBOptA\empty
    \ifx\QCTOptA\empty
      {}%
    \else
      \ifx\QCTOptB\empty
        {\QCTOptA}%
      \else
        [\QCTOptB]{\QCTOptA}%
      \fi
    \fi
  \else
    \ifx\QCBOptA\empty
      {}%
    \else
      \ifx\QCBOptB\empty
        {\QCBOptA}%
      \else
        [\QCBOptB]{\QCBOptA}%
      \fi
    \fi
  \fi
}
\def\GRAPHICSPS#1{%
 \ifcase\GRAPHICSTYPE%\GRAPHICSTYPE=0
   \special{ps: #1}%
 \or%\GRAPHICSTYPE=1
   \special{language "PS", include "#1"}%
%%%\or%\GRAPHICSTYPE=2
%%%  #1%
 \fi
}%
\def\graffile#1#2#3#4{%
%%% \ifnum\GRAPHICSTYPE=\tw@
%%%  %Following if using psfig
%%%  \@ifundefined{psfig}{\input psfig.tex}{}%
%%%  \psfig{file=#1, height=#3, width=#2}%
%%% \else
  %Following for all others
  % JCS - added BOXTHEFRAME, see below
    \bgroup
	   \@inlabelfalse
       \leavevmode
       \@ifundefined{bbl@deactivate}{\def~{\string~}}{\activesoff}%
        \raise -#4 \BOXTHEFRAME{%
           \hbox to #2{\raise #3\hbox to #2{\null #1\hfil}}}%
    \egroup
}%
\def\draftbox#1#2#3#4{%
 \leavevmode\raise -#4 \hbox{%
  \frame{\rlap{\protect\tiny #1}\hbox to #2%
   {\vrule height#3 width\z@ depth\z@\hfil}%
  }%
 }%
}%
\let\nographics=\@msidraft
\newif\ifwasdraft
\def\GRAPHIC#1#2#3#4#5{%
   \ifnum\@msidraft=\@ne\draftbox{#2}{#3}{#4}{#5}%
   \else\graffile{#1}{#3}{#4}{#5}%
   \fi
}
\def\addtoLaTeXparams#1{%
    \edef\LaTeXparams{\LaTeXparams #1}}%
\newif\ifBoxFrame \BoxFramefalse
\newif\ifOverFrame \OverFramefalse
\newif\ifUnderFrame \UnderFramefalse
\def\BOXTHEFRAME#1{%
   \hbox{%
      \ifBoxFrame
         \frame{#1}%
      \else
         {#1}%
      \fi
   }%
}
\def\doFRAMEparams#1{\BoxFramefalse\OverFramefalse\UnderFramefalse\readFRAMEparams#1\end}%
\def\readFRAMEparams#1{%
 \ifx#1\end%
  \let\next=\relax
  \else
  \ifx#1i\dispkind=\z@\fi
  \ifx#1d\dispkind=\@ne\fi
  \ifx#1f\dispkind=\tw@\fi
  \ifx#1t\addtoLaTeXparams{t}\fi
  \ifx#1b\addtoLaTeXparams{b}\fi
  \ifx#1p\addtoLaTeXparams{p}\fi
  \ifx#1h\addtoLaTeXparams{h}\fi
  \ifx#1X\BoxFrametrue\fi
  \ifx#1O\OverFrametrue\fi
  \ifx#1U\UnderFrametrue\fi
  \ifx#1w
    \ifnum\@msidraft=1\wasdrafttrue\else\wasdraftfalse\fi
    \@msidraft=\@ne
  \fi
  \let\next=\readFRAMEparams
  \fi
 \next
 }%
\def\IFRAME#1#2#3#4#5#6{%
      \bgroup
      \let\QCTOptA\empty
      \let\QCTOptB\empty
      \let\QCBOptA\empty
      \let\QCBOptB\empty
      #6%
      \parindent=0pt
      \leftskip=0pt
      \rightskip=0pt
      \setbox0=\hbox{\QCBOptA}%
      \@tempdima=#1\relax
      \ifOverFrame
          % Do this later
          \typeout{This is not implemented yet}%
          \show\HELP
      \else
         \ifdim\wd0>\@tempdima
            \advance\@tempdima by \@tempdima
            \ifdim\wd0 >\@tempdima
               \setbox1 =\vbox{%
                  \unskip\hbox to \@tempdima{\hfill\GRAPHIC{#5}{#4}{#1}{#2}{#3}\hfill}%
                  \unskip\hbox to \@tempdima{\parbox[b]{\@tempdima}{\QCBOptA}}%
               }%
               \wd1=\@tempdima
            \else
               \textwidth=\wd0
               \setbox1 =\vbox{%
                 \noindent\hbox to \wd0{\hfill\GRAPHIC{#5}{#4}{#1}{#2}{#3}\hfill}\\%
                 \noindent\hbox{\QCBOptA}%
               }%
               \wd1=\wd0
            \fi
         \else
            \ifdim\wd0>0pt
              \hsize=\@tempdima
              \setbox1=\vbox{%
                \unskip\GRAPHIC{#5}{#4}{#1}{#2}{0pt}%
                \break
                \unskip\hbox to \@tempdima{\hfill \QCBOptA\hfill}%
              }%
              \wd1=\@tempdima
           \else
              \hsize=\@tempdima
              \setbox1=\vbox{%
                \unskip\GRAPHIC{#5}{#4}{#1}{#2}{0pt}%
              }%
              \wd1=\@tempdima
           \fi
         \fi
         \@tempdimb=\ht1
         %\advance\@tempdimb by \dp1
         \advance\@tempdimb by -#2
         \advance\@tempdimb by #3
         \leavevmode
         \raise -\@tempdimb \hbox{\box1}%
      \fi
      \egroup%
}%
\def\DFRAME#1#2#3#4#5{%
  \hfil\break
  \bgroup
     \leftskip\@flushglue
	 \rightskip\@flushglue
	 \parindent\z@
	 \parfillskip\z@skip
     \let\QCTOptA\empty
     \let\QCTOptB\empty
     \let\QCBOptA\empty
     \let\QCBOptB\empty
	 \vbox\bgroup
        \ifOverFrame 
           #5\QCTOptA\par
        \fi
        \GRAPHIC{#4}{#3}{#1}{#2}{\z@}%
        \ifUnderFrame 
           \break#5\QCBOptA
        \fi
	 \egroup
   \egroup
   \break
}%
\def\FFRAME#1#2#3#4#5#6#7{%
 %If float.sty loaded and float option is 'h', change to 'H'  (gp) 1998/09/05
  \@ifundefined{floatstyle}
    {%floatstyle undefined (and float.sty not present), no change
     \begin{figure}[#1]%
    }
    {%floatstyle DEFINED
	 \ifx#1h%Only the h parameter, change to H
      \begin{figure}[H]%
	 \else
      \begin{figure}[#1]%
	 \fi
	}
  \let\QCTOptA\empty
  \let\QCTOptB\empty
  \let\QCBOptA\empty
  \let\QCBOptB\empty
  \ifOverFrame
    #4
    \ifx\QCTOptA\empty
    \else
      \ifx\QCTOptB\empty
        \caption{\QCTOptA}%
      \else
        \caption[\QCTOptB]{\QCTOptA}%
      \fi
    \fi
    \ifUnderFrame\else
      \label{#5}%
    \fi
  \else
    \UnderFrametrue%
  \fi
  \begin{center}\GRAPHIC{#7}{#6}{#2}{#3}{\z@}\end{center}%
  \ifUnderFrame
    #4
    \ifx\QCBOptA\empty
      \caption{}%
    \else
      \ifx\QCBOptB\empty
        \caption{\QCBOptA}%
      \else
        \caption[\QCBOptB]{\QCBOptA}%
      \fi
    \fi
    \label{#5}%
  \fi
  \end{figure}%
 }%
\def\makeactives{
  \catcode`\"=\active
  \catcode`\;=\active
  \catcode`\:=\active
  \catcode`\'=\active
  \catcode`\~=\active
}
   \gdef\activesoff{%
      \def"{\string"}%
      \def;{\string;}%
      \def:{\string:}%
      \def'{\string'}%
      \def~{\string~}%
      %\bbl@deactivate{"}%
      %\bbl@deactivate{;}%
      %\bbl@deactivate{:}%
      %\bbl@deactivate{'}%
    }
\def\FRAME#1#2#3#4#5#6#7#8{%
 \bgroup
 \ifnum\@msidraft=\@ne
   \wasdrafttrue
 \else
   \wasdraftfalse%
 \fi
 \def\LaTeXparams{}%
 \dispkind=\z@
 \def\LaTeXparams{}%
 \doFRAMEparams{#1}%
 \ifnum\dispkind=\z@\IFRAME{#2}{#3}{#4}{#7}{#8}{#5}\else
  \ifnum\dispkind=\@ne\DFRAME{#2}{#3}{#7}{#8}{#5}\else
   \ifnum\dispkind=\tw@
    \edef\@tempa{\noexpand\FFRAME{\LaTeXparams}}%
    \@tempa{#2}{#3}{#5}{#6}{#7}{#8}%
    \fi
   \fi
  \fi
  \ifwasdraft\@msidraft=1\else\@msidraft=0\fi{}%
  \egroup
 }%
\def\TEXUX#1{"texux"}
\long\def\QQQ#1#2{%
     \long\expandafter\def\csname#1\endcsname{#2}}%
\long\def\QQA#1#2{}%
\def\QTR#1#2{{\csname#1\endcsname {#2}}}%
\def\EXPAND#1[#2]#3{}%
\def\NOEXPAND#1[#2]#3{}%
\def\LaTeXparent#1{}%
\def\ChildStyles#1{}%
\def\ChildDefaults#1{}%
\def\QTagDef#1#2#3{}%
  \providecommand{\UNICODE}[2][]{\protect\rule{.1in}{.1in}}
  \providecommand{\U}[1]{\protect\rule{.1in}{.1in}}
\def\QQfnmark#1{\footnotemark}
 \def\abstract{%
  \if@twocolumn
   \section*{Abstract (Not appropriate in this style!)}%
   \else \small 
   \begin{center}{\bf Abstract\vspace{-.5em}\vspace{\z@}}\end{center}%
   \quotation 
   \fi
  }%
   \def\registered{\relax\ifmmode{}\r@gistered
                    \else$\m@th\r@gistered$\fi}%
 \def\r@gistered{^{\ooalign
  {\hfil\raise.07ex\hbox{$\scriptstyle\rm\text{R}$}\hfil\crcr
  \mathhexbox20D}}}}{}%
\newdimen\theight
\def\newfmtname{LaTeX2e}
  \DeclareOldFontCommand{\rm}{\normalfont\rmfamily}{\mathrm}
  \DeclareOldFontCommand{\sf}{\normalfont\sffamily}{\mathsf}
  \DeclareOldFontCommand{\tt}{\normalfont\ttfamily}{\mathtt}
  \DeclareOldFontCommand{\bf}{\normalfont\bfseries}{\mathbf}
  \DeclareOldFontCommand{\it}{\normalfont\itshape}{\mathit}
  \DeclareOldFontCommand{\sl}{\normalfont\slshape}{\@nomath\sl}
  \DeclareOldFontCommand{\sc}{\normalfont\scshape}{\@nomath\sc}
\def\alpha{{\Greekmath 010B}}%
\def\beta{{\Greekmath 010C}}%
\def\gamma{{\Greekmath 010D}}%
\def\delta{{\Greekmath 010E}}%
\def\epsilon{{\Greekmath 010F}}%
\def\zeta{{\Greekmath 0110}}%
\def\eta{{\Greekmath 0111}}%
\def\theta{{\Greekmath 0112}}%
\def\iota{{\Greekmath 0113}}%
\def\kappa{{\Greekmath 0114}}%
\def\lambda{{\Greekmath 0115}}%
\def\mu{{\Greekmath 0116}}%
\def\nu{{\Greekmath 0117}}%
\def\xi{{\Greekmath 0118}}%
\def\pi{{\Greekmath 0119}}%
\def\rho{{\Greekmath 011A}}%
\def\sigma{{\Greekmath 011B}}%
\def\tau{{\Greekmath 011C}}%
\def\upsilon{{\Greekmath 011D}}%
\def\phi{{\Greekmath 011E}}%
\def\chi{{\Greekmath 011F}}%
\def\psi{{\Greekmath 0120}}%
\def\omega{{\Greekmath 0121}}%
\def\varepsilon{{\Greekmath 0122}}%
\def\vartheta{{\Greekmath 0123}}%
\def\varpi{{\Greekmath 0124}}%
\def\varrho{{\Greekmath 0125}}%
\def\varsigma{{\Greekmath 0126}}%
\def\varphi{{\Greekmath 0127}}%
\def\nabla{{\Greekmath 0272}}
\def\FindBoldGroup{%
   {\setbox0=\hbox{$\mathbf{x\global\edef\theboldgroup{\the\mathgroup}}$}}%
}
\def\Greekmath#1#2#3#4{%
    \if@compatibility
        \ifnum\mathgroup=\symbold
           \mathchoice{\mbox{\boldmath$\displaystyle\mathchar"#1#2#3#4$}}%
                      {\mbox{\boldmath$\textstyle\mathchar"#1#2#3#4$}}%
                      {\mbox{\boldmath$\scriptstyle\mathchar"#1#2#3#4$}}%
                      {\mbox{\boldmath$\scriptscriptstyle\mathchar"#1#2#3#4$}}%
        \else
           \mathchar"#1#2#3#4% 
        \fi 
    \else 
        \FindBoldGroup
        \ifnum\mathgroup=\theboldgroup % For 2e
           \mathchoice{\mbox{\boldmath$\displaystyle\mathchar"#1#2#3#4$}}%
                      {\mbox{\boldmath$\textstyle\mathchar"#1#2#3#4$}}%
                      {\mbox{\boldmath$\scriptstyle\mathchar"#1#2#3#4$}}%
                      {\mbox{\boldmath$\scriptscriptstyle\mathchar"#1#2#3#4$}}%
        \else
           \mathchar"#1#2#3#4% 
        \fi     	    
	  \fi}
\newif\ifGreekBold  \GreekBoldfalse
\let\SAVEPBF=\pbf
\def\pbf{\GreekBoldtrue\SAVEPBF}%
  \newcounter{equationnumber}  
  \def\mathletters{%
     \addtocounter{equation}{1}
     \edef\@currentlabel{\theequation}%
     \setcounter{equationnumber}{\c@equation}
     \setcounter{equation}{0}%
     \edef\theequation{\@currentlabel\noexpand\alph{equation}}%
  }
    \def\BibTeX{{\rm B\kern-.05em{\sc i\kern-.025em b}\kern-.08em
                 T\kern-.1667em\lower.7ex\hbox{E}\kern-.125emX}}}{}%
\def\AmS{{\protect\usefont{OMS}{cmsy}{m}{n}%
                A\kern-.1667em\lower.5ex\hbox{M}\kern-.125emS}}}{}%
\def\@@eqncr{\let\@tempa\relax
    \ifcase\@eqcnt \def\@tempa{& & &}\or \def\@tempa{& &}%
      \else \def\@tempa{&}\fi
     \@tempa
     \if@eqnsw
        \iftag@
           \@taggnum
        \else
           \@eqnnum\stepcounter{equation}%
        \fi
     \fi
     \global\tag@false
     \global\@eqnswtrue
     \global\@eqcnt\z@\cr}
\def\TCItag{\@ifnextchar*{\@TCItagstar}{\@TCItag}}
\def\@TCItag#1{%
    \global\tag@true
    \global\def\@taggnum{(#1)}}
\def\@TCItagstar*#1{%
    \global\tag@true
    \global\def\@taggnum{#1}}
\def\ExitTCILatex{\makeatother }
\let\DOTSI\relax
\def\RIfM@{\relax\ifmmode}%
\def\FN@{\futurelet\next}%
\def\iint{\DOTSI\intno@\tw@\FN@\ints@}%
\def\iiint{\DOTSI\intno@\thr@@\FN@\ints@}%
\def\iiiint{\DOTSI\intno@4 \FN@\ints@}%
\def\idotsint{\DOTSI\intno@\z@\FN@\ints@}%
\def\ints@{\findlimits@\ints@@}%
\newif\iflimtoken@
\newif\iflimits@
\def\findlimits@{\limtoken@true\ifx\next\limits\limits@true
 \else\ifx\next\nolimits\limits@false\else
 \limtoken@false\ifx\ilimits@\nolimits\limits@false\else
 \ifinner\limits@false\else\limits@true\fi\fi\fi\fi}%
\def\multint@{\int\ifnum\intno@=\z@\intdots@                          %1
 \else\intkern@\fi                                                    %2
 \ifnum\intno@>\tw@\int\intkern@\fi                                   %3
 \ifnum\intno@>\thr@@\int\intkern@\fi                                 %4
 \int}%                                                               %5
\def\multintlimits@{\intop\ifnum\intno@=\z@\intdots@\else\intkern@\fi
 \ifnum\intno@>\tw@\intop\intkern@\fi
 \ifnum\intno@>\thr@@\intop\intkern@\fi\intop}%
\def\intic@{%
    \mathchoice{\hskip.5em}{\hskip.4em}{\hskip.4em}{\hskip.4em}}%
\def\negintic@{\mathchoice
 {\hskip-.5em}{\hskip-.4em}{\hskip-.4em}{\hskip-.4em}}%
\def\ints@@{\iflimtoken@                                              %1
 \def\ints@@@{\iflimits@\negintic@
   \mathop{\intic@\multintlimits@}\limits                             %2
  \else\multint@\nolimits\fi                                          %3
  \eat@}%                                                             %4
 \else                                                                %5
 \def\ints@@@{\iflimits@\negintic@
  \mathop{\intic@\multintlimits@}\limits\else
  \multint@\nolimits\fi}\fi\ints@@@}%
\def\intkern@{\mathchoice{\!\!\!}{\!\!}{\!\!}{\!\!}}%
\def\plaincdots@{\mathinner{\cdotp\cdotp\cdotp}}%
\def\intdots@{\mathchoice{\plaincdots@}%
 {{\cdotp}\mkern1.5mu{\cdotp}\mkern1.5mu{\cdotp}}%
 {{\cdotp}\mkern1mu{\cdotp}\mkern1mu{\cdotp}}%
 {{\cdotp}\mkern1mu{\cdotp}\mkern1mu{\cdotp}}}%
\def\RIfM@{\relax\protect\ifmmode}
\def\text{\RIfM@\expandafter\text@\else\expandafter\mbox\fi}
\let\nfss@text\text
\def\text@#1{\mathchoice
   {\textdef@\displaystyle\f@size{#1}}%
   {\textdef@\textstyle\tf@size{\firstchoice@false #1}}%
   {\textdef@\textstyle\sf@size{\firstchoice@false #1}}%
   {\textdef@\textstyle \ssf@size{\firstchoice@false #1}}%
   \glb@settings}
\def\textdef@#1#2#3{\hbox{{%
                    \everymath{#1}%
                    \let\f@size#2\selectfont
                    #3}}}
\newif\iffirstchoice@
\def\Let@{\relax\iffalse{\fi\let\\=\cr\iffalse}\fi}%
\def\vspace@{\def\vspace##1{\crcr\noalign{\vskip##1\relax}}}%
\def\multilimits@{\bgroup\vspace@\Let@
 \baselineskip\fontdimen10 \scriptfont\tw@
 \advance\baselineskip\fontdimen12 \scriptfont\tw@
 \lineskip\thr@@\fontdimen8 \scriptfont\thr@@
 \lineskiplimit\lineskip
 \vbox\bgroup\ialign\bgroup\hfil$\m@th\scriptstyle{##}$\hfil\crcr}%
\def\Sb{_\multilimits@}%
\def\endSb{\crcr\egroup\egroup\egroup}%
\def\Sp{^\multilimits@}%
\newdimen\ex@
\def\rightarrowfill@#1{$#1\m@th\mathord-\mkern-6mu\cleaders
 \hbox{$#1\mkern-2mu\mathord-\mkern-2mu$}\hfill
 \mkern-6mu\mathord\rightarrow$}%
\def\leftarrowfill@#1{$#1\m@th\mathord\leftarrow\mkern-6mu\cleaders
 \hbox{$#1\mkern-2mu\mathord-\mkern-2mu$}\hfill\mkern-6mu\mathord-$}%
\def\leftrightarrowfill@#1{$#1\m@th\mathord\leftarrow
\mkern-6mu\cleaders
 \hbox{$#1\mkern-2mu\mathord-\mkern-2mu$}\hfill
 \mkern-6mu\mathord\rightarrow$}%
\def\overrightarrow{\mathpalette\overrightarrow@}%
\def\overrightarrow@#1#2{\vbox{\ialign{##\crcr\rightarrowfill@#1\crcr
 \noalign{\kern-\ex@\nointerlineskip}$\m@th\hfil#1#2\hfil$\crcr}}}%
\def\overleftarrow{\mathpalette\overleftarrow@}%
\def\overleftarrow@#1#2{\vbox{\ialign{##\crcr\leftarrowfill@#1\crcr
 \noalign{\kern-\ex@\nointerlineskip}$\m@th\hfil#1#2\hfil$\crcr}}}%
\def\overleftrightarrow{\mathpalette\overleftrightarrow@}%
\def\overleftrightarrow@#1#2{\vbox{\ialign{##\crcr
   \leftrightarrowfill@#1\crcr
 \noalign{\kern-\ex@\nointerlineskip}$\m@th\hfil#1#2\hfil$\crcr}}}%
\def\underrightarrow{\mathpalette\underrightarrow@}%
\def\underrightarrow@#1#2{\vtop{\ialign{##\crcr$\m@th\hfil#1#2\hfil
  $\crcr\noalign{\nointerlineskip}\rightarrowfill@#1\crcr}}}%
\def\underleftarrow{\mathpalette\underleftarrow@}%
\def\underleftarrow@#1#2{\vtop{\ialign{##\crcr$\m@th\hfil#1#2\hfil
  $\crcr\noalign{\nointerlineskip}\leftarrowfill@#1\crcr}}}%
\def\underleftrightarrow{\mathpalette\underleftrightarrow@}%
\def\underleftrightarrow@#1#2{\vtop{\ialign{##\crcr$\m@th
  \hfil#1#2\hfil$\crcr
 \noalign{\nointerlineskip}\leftrightarrowfill@#1\crcr}}}%
\def\qopnamewl@#1{\mathop{\operator@font#1}\nlimits@}
\let\nlimits@\displaylimits
\def\setboxz@h{\setbox\z@\hbox}
\def\varlim@#1#2{\mathop{\vtop{\ialign{##\crcr
 \hfil$#1\m@th\operator@font lim$\hfil\crcr
 \noalign{\nointerlineskip}#2#1\crcr
 \noalign{\nointerlineskip\kern-\ex@}\crcr}}}}
 \def\rightarrowfill@#1{\m@th\setboxz@h{$#1-$}\ht\z@\z@
  $#1\copy\z@\mkern-6mu\cleaders
  \hbox{$#1\mkern-2mu\box\z@\mkern-2mu$}\hfill
  \mkern-6mu\mathord\rightarrow$}
\def\leftarrowfill@#1{\m@th\setboxz@h{$#1-$}\ht\z@\z@
  $#1\mathord\leftarrow\mkern-6mu\cleaders
  \hbox{$#1\mkern-2mu\copy\z@\mkern-2mu$}\hfill
  \mkern-6mu\box\z@$}
\def\projlim{\qopnamewl@{proj\,lim}}
\def\injlim{\qopnamewl@{inj\,lim}}
\def\varinjlim{\mathpalette\varlim@\rightarrowfill@}
\def\varprojlim{\mathpalette\varlim@\leftarrowfill@}
\def\varliminf{\mathpalette\varliminf@{}}
\def\varliminf@#1{\mathop{\underline{\vrule\@depth.2\ex@\@width\z@
   \hbox{$#1\m@th\operator@font lim$}}}}
\def\varlimsup{\mathpalette\varlimsup@{}}
\def\varlimsup@#1{\mathop{\overline
  {\hbox{$#1\m@th\operator@font lim$}}}}
\def\align{\@verbatim \frenchspacing\@vobeyspaces \@alignverbatim
You are using the "align" environment in a style in which it is not defined.}
\let\csname endalign*\endcsname =\endtrivlist
\def\alignat{\@verbatim \frenchspacing\@vobeyspaces \@alignatverbatim
You are using the "alignat" environment in a style in which it is not defined.}
\let\csname endalignat*\endcsname =\endtrivlist
\def\xalignat{\@verbatim \frenchspacing\@vobeyspaces \@xalignatverbatim
You are using the "xalignat" environment in a style in which it is not defined.}
\let\csname endxalignat*\endcsname =\endtrivlist
\def\gather{\@verbatim \frenchspacing\@vobeyspaces \@gatherverbatim
You are using the "gather" environment in a style in which it is not defined.}
\let\csname endgather*\endcsname =\endtrivlist
\def\multiline{\@verbatim \frenchspacing\@vobeyspaces \@multilineverbatim
You are using the "multiline" environment in a style in which it is not defined.}
\let\csname endmultiline*\endcsname =\endtrivlist
\def\arrax{\@verbatim \frenchspacing\@vobeyspaces \@arraxverbatim
You are using a type of "array" construct that is only allowed in AmS-LaTeX.}
\def\tabulax{\@verbatim \frenchspacing\@vobeyspaces \@tabulaxverbatim
You are using a type of "tabular" construct that is only allowed in AmS-LaTeX.}
\let\csname endarrax*\endcsname =\endtrivlist
\let\csname endtabulax*\endcsname =\endtrivlist
 \def\endequation{%
     \ifmmode\ifinner % FLEQN hack
      \iftag@
        \addtocounter{equation}{-1} % undo the increment made in the begin part
        $\hfil
           \displaywidth\linewidth\@taggnum\egroup \endtrivlist
        \global\tag@false
        \global\@ignoretrue   
      \else
        $\hfil
           \displaywidth\linewidth\@eqnnum\egroup \endtrivlist
        \global\tag@false
        \global\@ignoretrue 
      \fi
     \else   
      \iftag@
        \addtocounter{equation}{-1} % undo the increment made in the begin part
        \eqno \hbox{\@taggnum}
        \global\tag@false%
        $$\global\@ignoretrue
      \else
        \eqno \hbox{\@eqnnum}% $$ BRACE MATCHING HACK
        $$\global\@ignoretrue
      \fi
     \fi\fi
 } 
 \newif\iftag@ \tag@false
 \def\TCItag{\@ifnextchar*{\@TCItagstar}{\@TCItag}}
 \def\@TCItag#1{%
     \global\tag@true
     \global\def\@taggnum{(#1)}}
 \def\@TCItagstar*#1{%
     \global\tag@true
     \global\def\@taggnum{#1}}
     \def\tag{\@ifnextchar*{\@tagstar}{\@tag}}
     \def\@tag#1{%
         \global\tag@true
         \global\def\@taggnum{(#1)}}
     \def\@tagstar*#1{%
         \global\tag@true
         \global\def\@taggnum{#1}}
\begin{document}

\title[Rigorous approximation of invariant measures]{An elementary approach to rigorous approximation of invariant measures}
\author{Stefano Galatolo$^1$}
\email{$^1$ galatolo@dm.unipi.it}
\address{Dipartimento di Matematica, Universita di Pisa, Via \ Buonarroti 1,Pisa}
\author{Isaia Nisoli$^2$}
\email{$^2$ nisoli@im.ufrj.br}
\address{Instituto de Matem\'{a}tica - UFRJ
 Av. Athos da Silveira Ramos 149,
Centro de Tecnologia - Bloco C
Cidade Universitária -
Ilha do Fund\~{a}o.
Caixa Postal 68530
21941-909 Rio de Janeiro - RJ - Brasil}

\begin{abstract}

We describe a framework in which is possible to develop and implement algorithms 
 for the approximation of invariant measures of dynamical systems with a given bound on the error of the approximation.

Our approach is based on a general statement on the approximation of fixed points for
operators between normed vector spaces, allowing an explicit estimation of the
error. 

We show the flexibility of our approach by applying it  to  piecewise expanding maps and to maps with indifferent fixed points. We show how the required estimations can be implemented to  compute invariant densities up to a given error in the $L^{1}$ or $L^\infty $ distance. We also show how to use this to compute an estimation  with certified error for the entropy of those systems.

We show how several related computational and numerical issues can be solved to obtain working implementations, and experimental results on some one dimensional maps. 
\end{abstract}

\subjclass{37M25}
\keywords{Approximation of invariant measure, transfer operator, fixed point approximation, Lyapunov exponent, interval arithmetics}
\maketitle

\section{Introduction}

\paragraph{\bf Overview}

Several important features of the statistical behavior of a dynamical system are
``encoded'' in invariant measures, and in particular in the so called \emph{Physical Invariant Measures}. Those measures are the ones which represent the statistical behavior of a large set of initial conditions. 
Having quantitative information on those measures can give information on the statistical behavior for the long time evolution of the
system.

The problem of the existence and properties of such invariant measures has
become a central area of research in the modern theory of Dynamical Systems.
A big part of the results are abstract and give no quantitative precise
information on the measure. This is  a significant limitation in
the applications and gives strong motivation to the search for algorithms which are
able to compute quantitative information on the physical measure.

The problem of approximating some interesting invariant measure of dynamical systems was
quite widely studied in the literature. Some algorithm are proved to
converge to the real invariant measure (up to errors in some given metrics)
in some classes of systems. Sometime asymptotical estimates on the rate of convergence are provided (see e.g. \cite{Din93, Din94}, \cite{DelJu02,Del99}  ,\cite{BM}, \cite{Mr}, \cite{F08}), but results giving an explicit (rigorous) bound on the error
are relatively few (see e.g. \cite{KMY,BB,L,PJ99,H}).

The biggest part of the methods and the results already known hence give not a rigorous bound on the error
which is made in the approximation. In this way,  the result of a single (finite) computation as the ones we can perform on everyday computers has not a precise mathematical meaning. If we implement an approach providing such
explicit bounds, the results of suitable, careful computations can be interpreted as rigorously (computer aided) proved statements on the behavior of the observed system.

In this paper we describe an approach which is able to provide algorithms to approximate interesting invariant measures with a precise bound on the error, and its practical implementation. 
The approach is quite general and is based on a quantitative statement on the sability of fixed points of operators under suitable approximations.
% In some sense the approach  can be seen as a simplification of the approach presented in \cite{L} where the estimations on the approximation are based on a quantitative spectral stability result, or as a generalization of other approaches, designed for specific classes of systems and approximation schemes.
In our approach we focus on the estimations which are important to compute fixed points (rather than the whole spectral picture, as in \cite{L})  in a way that we can keep it as sharp as possible, trying also to use as much as possible the information that can be recovered by a suitable (and computable) finite dimensional approximation of the problem. 
The practical implementation of the method and the necessary
precise estimates are described here at various levels of generality, arriving to a complete implementation  for a class of piecewise expanding maps and a class of maps with an indifferent fixed point. We perform the estimates for the computation of the  invariant measure up to small errors in the $L^1$ norm in these cases, and also with small errors in the $L^\infty$  norm for  a class of piecewise expanding maps with higher regularity.
We also present some real computer experiment
performing the rigorous computation on interval maps, and our solution to the
nontrivial computational/numeric issues arising.

We end remarking that general, abstract results, on the computability of invariant measures are given in \cite{GalHoyRoj3}
(see also \cite{GHR}). In these papers  some negative result are also shown. Indeed, \emph{there are examples of computable\footnote{Computable, here means that the dynamics can be approximated at any accuracy
by an algorithm, see e.g. \cite{GalHoyRoj3} for precise definition.} systems 
without any computable invariant measure}. This show some subtelty in the general problem of computing the invariant measure up to a given error.

\paragraph{\bf Plan of the paper}

In section \ref{1} we show a general result regarding the approximation of
fixed points for linear operators between normed spaces. In this result
fixed points are approximated by extracting and exploiting as much information
as possible from the approximating operator. This general statement is suitable to be
applied to the Ulam approximation method and other discretizations.
In Section \ref{ulammthd} we show
how this can be done and we show an algorithm for the approximation of
invariant measures up to small errors in the $L^1$ norm for the case of piecewise expanding maps (with bounded
derivative). 

In section \ref{linfty} we show how in suitably regular systems, we can use a similar construction to compute the invariant measure, up to small errors in the $L^\infty $ norm.

In Section \ref{mann} we show how to apply the approach to a class of maps with indifferent fixed points.

In Section \ref{sec:ImplAlg} we show how to implement the algorithms in
practice. In particular we have to show a way to rapidly compute the steady
state of a large Markov chain up to a prescribed error. We also discuss several
other computational and programming issues, explaining how we have
implemented the algorithm to perform real rigorous computations on some
example of piecewise expanding maps.

 In section \ref{sec:Lyapunov}, as an application we show a rigorous estimation of the entropy (by the Lyapunov exponent) of such maps, as explained.
These estimations can be used as a benchmark for the validation of statistical methods to compute entropy from time series.

In Section \ref{sec:NumExp} we show the result of some experiments.
Here the invariant measure is
computed up to an error of less than 1\% with respect to the $L^{1}$ distance, while in section \ref{sec:NumExpHigher} we show an experiment in the $L^{\infty}$ framework.

\noindent \textbf{Acknowledgements}.\textbf{\ }
The authors wish to thank C. Liverani for his encouragement and fundamental suggestions on the
method we are going to describe.
We also would like to thank S. Luzzatto and ICTP (Trieste) for support and encouragement, B. Saussol for interesting suggestions, and W. Bahsoun for fruitful discussions on maps with indifferent fixed points.

The first author wishes  to thank ``Gruppo Nazionale per l'Analisi Matematica, la Probabilit\'a e
le loro Applicazioni'' (GNAMPA, INDAM), for financial support. 

The second author wishes to thank the CNPQ (Conselho Nacional de Pesquisas Cient\`{i}ficas e
Tecnol\`{o}gicas CNPq-Brazil).
%, and IM-UFRJ (Instituto de Matemat\'{i}cas of Universidade Federal do
%Rio do Janeiro).

The authors acknowledge the CINECA Award N. HP10C42W9Q, 2011 for the availability of high performance computing resources and support.

\section{Invariant measures and transfer operator}

Let $X$ be a metric space, $T:X\mapsto X$ a Borel measurable map and $\mu $
a $T$-invariant Borel probability measure. An invariant measure is a Borel
probability measure $\mu $ on $X$ such that for each measurable set $A$ it
holds $\mu (A)=\mu (T^{-1}(A))$.

A set $A$ is called $T$-invariant if $T^{-1}(A)=A\ (mod\ 0)$. The system $ (X,T,\mu )$ 
is said to be ergodic if each $T$-invariant set has total or null measure.
 In such systems the well known Birkhoff ergodic theorem says that for any $f\in L^{1}(X,\mu )$
it holds 
\begin{equation}
\underset{n\rightarrow \infty }{\lim }\frac{S_{n}^{f}(x)}{n}=\int \!{f}\,%
\mathrm{d}{\mu },  \label{Birkhoff}
\end{equation}%
for $\mu $ almost each $x$, where $S_{n}^{f}=f+f\circ T+\ldots +f\circ
T^{n-1}.$

We say that a point $x$ belongs to the basin of an invariant measure $\mu $
if (\ref{Birkhoff}) holds at $x$ for each bounded continuous $f$. In case $X$
is a manifold (possibly with boundary), a physical measure is an invariant
measure whose basin has positive Lebesgue measure (for more details and a
general survey see \cite{Y}).

\paragraph{\bf The transfer operator}

Let us consider the space $SM(X)$ of  Borel  measures with sign on $
X.$ A function $T$ between metric spaces naturally induces a function $
L:SM(X)\rightarrow SM(X)$ which is linear and is called transfer operator
(associated to $T$). Let us define $L$: if $\mu \in SM(X)$ then $L[\mu ]$
is such that
\begin{equation*}
L[\mu ](A)=\mu (T^{-1}(A)).
\end{equation*}%
Measures which are invariant for $T$ are fixed points of $L$, hence the
computation of invariant measures can be done by computing the fixed points of this
operator (restricted to a suitable Banach subspace where the interesting invariant measure is supposed to be). 
The most applied and studied strategy is to find a finite dimensional approximation for $L$ reducing the problem to the computation of the corresponding relevant eigenvectors of a finite matrix (some examples in Sections 
\ref{ulammthd}, \ref{sec:higher2} ).
 An approach to estimate the distance between a fixed point of a  discretization and a fixed point for the real operator can be based on quantitative spectral stability results given in \cite{KL}. 
The method  requires some estimation (see \cite{L}) which can not  be trivially done in
a rigorous way in a reasonable time. The approach we explain below  requires simpler assumptions and estimations, moreover a part of the required estimations will be done by the computer.

\section{A general statement on the approximation of fixed points\label{1}}

Let us consider a restriction of the transfer operator to an invariant normed subspace (often a Banach space of  measures having some regularity) $\mathcal{B\subseteq }SM(X)$  and let us denote its norm as $||\   ||_{\mathcal{B}}$. Let us still denote the restricted tranfer operator by $L$:$\mathcal{B\rightarrow B}$. Suppose it is possible to approximate $L$
in a suitable way by another operator $L_{\delta }$ for which we can
calculate fixed points and other properties.  We suppose $\delta \in {\mathbb R}$, being a parameter measuring the accuracy of the approximation (e.g. the size of a grid).

Our extent is to exploit as
much as possible the information contained in $L_{\delta }$ to approximate
fixed points of $L$.
Let us hence suppose that $f,$ $f_{\delta }\in \mathcal{B}$ are fixed
points, respectively of $L$ and $L_{\delta }$.

\begin{theorem}
\label{gen}Suppose that:

\begin{description}
\item[a)] $||L_{\delta }f-Lf||_{\mathcal{B}}<\infty $

\item[b)] $\exists \,N$ such that $||L_{\delta }^{N}(f_{\delta }-f)||_{%
\mathcal{B}}\leq \frac{1}{2}||f_{\delta }-f||_{\mathcal{B}}$

\item[c)] $L^i_{\delta }$ is continuous on $\mathcal{B}$; $\exists
 \, C_i ~s.t.\forall g\in \mathcal{B},~||L_{\delta }^{i} g||_{\mathcal{B}}\leq
C_i ||g||_{\mathcal{B}}.$
\end{description}

Then 
\begin{equation}\label{mainres}
||f_{\delta }-f||_{\mathcal{B}}\leq 2 ||L_{\delta }f-Lf||_{\mathcal{B}}  \sum_{i\in [0,N-1]}C_i .
\end{equation}
\end{theorem}

\begin{remark} We remark that the estimation for the error computed in Equation \ref{mainres} is an "a posteriori estimation", depending on $N$ and on $C_i$, quantities regarding the approximated operators, which will be estimated by the computer during the computation.
In the following we show how the above items a),b),c) are natural in the
context of approximating a fixed point of the transfer operator: 

a)  means that in some sense $L_{\delta }$ is an approximation of $L$ in the $\mathcal{B}$ norm. Moreover, the size of $ ||L_{\delta }f-Lf||_{\mathcal{B}} $ will be small if the approximation is good. This is the main ingredient to make the final error to be small in Equation \ref{mainres} .

 About b), the required $N$ will be calculated from a description of $L_{\delta }$
exploiting the fact that, under natural assumptions $L_{\delta }$
asymptotically contracts the space of zero average signed measures in 
$\mathcal{B}$.  We remark that since $L_{\delta }$ in our applications will be represented by a matrix  this can be seen as a "decay of correlation estimation with finite resolution". This estimation will be performed by the computer and will be a main ingredient for our "a posteriori" estimation of the error. 
This replaces some a priori estimations on the decay of correlation of the real system which are needed in some other approaches.
 Remark that b) also means that there is no ``projection'' of $f$
on other fixed points of $L_{\delta }$ than $f_{\delta }$. 

c) will be also estimated or computed by the way $L_{\delta }$ is defined.

We also remark that the assumptions required on the  operators $L, L_{\delta } $ are quite weak, in particular they are not required to satisfy some particular Lasota Yorke inequality. 
\end{remark}

\begin{proof}
(of Theorem \ref{gen}) The proof is a direct computation from the assumptions
\begin{eqnarray*}
||f_{\delta }-f||_{\mathcal{B}} &\leq &||L_{\delta }^{N}f_{\delta
}-L^{N}f||_{\mathcal{B}} \\
&\leq &||L_{\delta }^{N}f_{\delta }-L_{\delta }^{N}f||_{\mathcal{B}%
}+||L_{\delta }^{N}f-L^{N}f||_{\mathcal{B}} \\
&\leq &||L_{\delta }^{N}(f_{\delta }-f)||_{\mathcal{B}}+||L_{\delta
}^{N}f-L^{N}f||_{\mathcal{B}} \\
&\leq &\frac{1}{2}||f_{\delta }-f||_{\mathcal{B}}+||L_{\delta
}^{N}f-L^{N}f||_{\mathcal{B}}
\end{eqnarray*}%
(applying item b)). Hence%
\begin{equation*}
||f_{\delta }-f||_{\mathcal{B}}\leq 2||L_{\delta }^{N}f-L^{N}f||_{\mathcal{B}%
}
\end{equation*}%
but%
\begin{equation*}
L_{\delta }^{N}-L^{N}=\sum_{k=1}^{N}L_{\delta }^{N-k}(L_{\delta }-L)L^{k-1}
\end{equation*}%
hence%
\begin{eqnarray*}
(L_{\delta }^{N}-L^{N})f &=&\sum_{k=1}^{N}L_{\delta }^{N-k}(L_{\delta
}-L)L^{k-1}f \\
&=&\sum_{k=1}^{N}L_{\delta }^{N-k}(L_{\delta }-L)f
\end{eqnarray*}%
by item c), hence%
\begin{eqnarray*}
||(L^{N}-L_{\delta }^{N})f||_{\mathcal{B}} &\leq
&\sum_{k=1}^{N}C_{N-k}||(L_{\delta }-L)f||_{\mathcal{B}} \\
&\leq &||(L_{\delta }-L)f||_{\mathcal{B}}  \sum_{i\in [0,N-1]}C_i
\end{eqnarray*}%
by item a), and then%
\begin{equation*}
||f_{\delta }-f||_{\mathcal{B}}\leq 2||(L_{\delta }-L)f||_{\mathcal{B}}  \sum_{i\in [0,N-1]}C_i.
\end{equation*}
\end{proof}

\begin{remark}
We remark that by the above proof, the factor $2$ in \eqref{mainres} can be reduced as near as
wanted to $1$ by putting at item b) a factor smaller than $\frac 12 $. 
Moreover,  as $(L_{\delta }-L)f$ belongs to the space $V$ of zero total mass measures ($V=\{ \mu \ s.t. \mu(X)=0 \}$), the coefficients $C_i$ can be replaced by the operator norm of $L^i_{\delta}$ resticted to $V$.
\end{remark}

\section{Estimation with $L^{1}$ norm and Ulam method\label{ulammthd}}

We now give an example of application of the above general result to the approximation of invariant measures of dynamical systems of to small errors in the $L^1$ norm with the Ulam method, entering in more details for this case.  Let us briefly recall the basic notions.
Let us suppose now that $X$ is a manifold with boundary. In the \emph{Ulam Discretization} method the space $X$
is discretized by a partition $I_{\delta }$ (with $k$ elements) and the
system is approximated by a (finite state) Markov Chain with transition
probabilities 
\begin{equation}
P_{ij}={m(T}^{-1}{(I_{j})\cap I_{i})}/{m(I_{i})}  \label{pij}
\end{equation}%
(where $m$ is the normalized Lebesgue measure on $X$) and defining a
corresponding finite-dimensional operator $L_{\delta }$ ($L_{\delta }$
depend on the whole chosen partition but simplifying we will indicate it
with a parameter $\delta $ related to the size of the elements of the
partition) we remark that in this way, to $L_{\delta }$ corresponds a
matrix $P_{k}=(P_{ij})$ .

We remark that $L_{\delta }$ can be seen in the following way: let $%
F_{\delta }$ be the $\sigma -$algebra associated to the partition $I_{\delta
}$, then:%
\begin{equation}
L_{\delta }(f)=\mathbf{E}(L(\mathbf{E}(f|F_{\delta }))|F_{\delta }),
\label{000}
\end{equation}
(see also \cite{L}, notes 9 and 10 for some more explanations). Taking finer
and finer partitions, in certain systems  the finite dimensional model converges to
the real one and its natural invariant measure to the physical measure of
the original system, see e.g. \cite{BM, F07, F08, L}.

We now apply Theorem \ref{gen} to a more concrete case: $L^1$ estimations with Ulam discretization. Suppose that:

\begin{itemize}
\item $L_{\delta }$ is the Ulam approximation of $L$ as defined above.

\item $\mathcal{B}=L^{1}(X)$,  \footnote{To be more precise, we suppose $\mathcal{B}$ to be the space of absolutely continuous measures on $X$. We will informally identify  a measure of this kind with its density.}

\item There is an estimation for the regularity of $f$ compatible with the approximation procedure (to have the estimation needed at item $a)$ of Thm. \ref{gen}).

\end{itemize}

 As an example to explain this latter point, the norm $||f||_{\mathcal{B}^{\prime }}$  can be estimated (in some space ${\mathcal{B}^{\prime }}$ of regular measures) and an there is an estimation for the norm $||L_{\delta }-L||_{\mathcal{B}^{\prime}\rightarrow L^{1}}$  (where $||.||_{\mathcal{B}^{\prime }\rightarrow L^{1}}$ is the operator
norm, as an operator $\mathcal{B}^{\prime }\rightarrow L^{1}$)

In this way, the estimate required at item $a)$ of
Theorem \ref{gen} can be given as 
\begin{equation}\label{exrem}
||L_{\delta }f-Lf||_{L^{1}}\leq ||L_{\delta }-L||_{\mathcal{B}^{\prime
}\rightarrow L^{1}}||f||_{\mathcal{B}^{\prime }};
\end{equation}%
and we could bound the final error as 
\begin{equation*}
||f_{\delta }-f||_{L^{1}}\leq 2\sum^{N-1}_{0} C_{i}   ||L_{\delta }-L||_{\mathcal{B}^{\prime
}\rightarrow L^{1}}||f||_{\mathcal{B}^{\prime }}.
\end{equation*}
This  is possible, for example when  $L$ satisfies a Lasota Yorke inequality (see \cite{B,LY,LG} and Theorem \ref{th8}
e.g.) of the type
\begin{equation}
||L^{n}g||_{\mathcal{B}^{\prime }}\leq \lambda ^{n}||g||_{\mathcal{B}%
^{\prime }}+B||g||_{L^{1}},  \label{111}
\end{equation}
implying $||f||_{\mathcal{B}^{\prime }}\leq B$.

More details on this will be given in the next sections, where we consider certain classes of suitable maps and show how to implement the estimates needed in the approach. 

In this setting, hence, in certain classes of examples:

\begin{description}
\item[I1] a suitable estimation for the regularity of $f$ can be provided by the 
coefficients of the L-Y inequality (see also Section \ref{secitem1} below) or by other techniques, like invariant cones  (see section \ref{mann}).

\item[I2] an approximation inequality can be provided to satisfy  item  $a)$ of Theorem \ref{gen} : for example
 $||L_{\delta }-L||_{\mathcal{B}^{\prime
}\rightarrow L^{1}}$ is estimated a priori by the method of approximation
(see Section \ref{secitem2} below);

\item[I3]\label{itemI3} the integer $N$ relative to item b) in Theorem \ref{gen} can be
estimated by the matrix $P_{k}$ relative to $L_{\delta }$.
% In practice what we need to do is to calculate the first $N$ such that $|| P_{k}^{N}|_{V}||_{1}<1/2$. Here $||.||_{1}$ is the $1$-operator norm of $P_{k}$, and $V$ is the space of vectors with zero mean 
(see Section \ref{secitem3} below).

\item[I4] Since $\mathcal{B}$ $=L^{1}(X)$ and we consider the Ulam
approximation then $C_i =1$ (see Section \ref{secitem4} below).
\end{description}

Now let us discuss more precisely Item I3, which is central in this approach and whose
discussion is general. We discuss the other Items in the Subsection \ref%
{subsec:pwexp}, with precise estimations related to a particular family of
cases: the piecewise expanding maps.

\subsection{About item I3\label{secitem3}}

To compute $N$ we consider $V=\{\mu \in \mathcal{B} | \mu (X)=0\}$ and $||L_{\delta
}^{n}|_{V}||_{L^{1}\rightarrow L^{1}}$. Since $f-f_{\delta }\in V$, if we
prove 
\begin{equation*}
||L_{\delta }^{n}|_{V}||_{L^{1}\rightarrow L^{1}}<\frac{1}{2}
\end{equation*}%
we imply Item $b)$ of theorem \ref{gen}. In the Ulam approximation, $%
L_{\delta }$ is a finite rank operator, hence, once we fix a basis this is
given by a matrix. 

For the sake of simplicity we will suppose that all sets $I_{j}$ have the
same measure: $m(I_{j})=1/k$. This will simplify some notation. 

The natural basis $\{f_{1},...,f_{k}\}$ to consider is
the set of characteristic functions of the sets in the partition $I_{\delta
}.$ If $I_{\delta }=\{I_{1},...,I_{k}\}$ then $f_{i}=\frac{1}{\delta} 1_{I_{i}}$; after the
choice of this basis, the set of linear combinations of such characteristic
functions can be identified with $\mathbb{R}^{k}.$ By a small abuse of
notation we will also indicate by $V$ the set of zero average vectors in $%
\mathbb{R}^{k}$.

To determine $N$ we have to consider the matrix $P_{k}|_{V}$ associated to
the action of $L_{\delta }$ on the space of zero mean vectors with respect
to this basis and compute its operator norm $||P_{k}|_{V}||_{1}$ where$%
\footnote{$|.|_{1}$ will denote the $L^{1}$ norm on $\mathbb{R}^{n}.$}$ 
\begin{equation*}
||P_{k}|_{V}||_{1}=\underset{|v|_{1}=1}{\sup }|P(v)|_{1}.
\end{equation*}

By Equation \ref{000} the behavior of $L_{\delta }$ and its relation with $%
P_{k}$ is described by

\begin{equation*}
f\overset{\mathbf{E}|F_{\delta }\circ I^{-1}}{\rightarrow }v\overset{P_{k}}{%
\rightarrow }v^{\prime }\overset{I}{\rightarrow }f^{\prime }=L_{\delta }(f)
\end{equation*}%
where $I$:$~\mathbb{R}^{k}\rightarrow L^{1}$ is the trivial identification
of a vector in $\mathbb{R}^k$ with a piecewise constant function given by the choice of the
basis. This implies that
\begin{equation*}
||L_{\delta }||_{L^{1}\rightarrow L^{1}}\leq ||P_{k}||_{1}.
\end{equation*}%
Indeed if $f\in L^{1}$ , $||\mathbf{E}(f|F_{\delta })||_{L^{1}}\leq
||f||_{L^{1}}$ and $I$ is trivially an isometry.

Remark that if $\int f\text{dm}=0,$ then $\int E(f|F_{\delta })\text{dm}=0$
and converse, and hence 
\begin{equation*}
||L_{\delta }|_{V}||_{L^{1}}\leq ||P_{k}|_{I^{-1}(V)}||_{1}.
\end{equation*}
Since each vector is represented by a suitable step function, then $||L_{\delta }|_{V}||_{L^{1}}= ||P_{k}|_{I^{-1}(V)}||_{1}.$

The matrix corresponding to $L_{\delta }^{N}$ is $P_{k}^{N}$. 
Then
\begin{equation*}
||L_{\delta }^{N}|_{V}||_{L^{1}}= ||P_{k}^{N}|_{I^{-1}(V)}||_{1}.
\end{equation*}

Summarizing, we can have an estimation of $||L_{\delta
}^{N}|_{V}||_{L^{1}\rightarrow L^{1}}$by computing a matrix $\tilde{P}_{k,V}$
approximating $P_{k}|_{I^{-1}(V)}$ and $||\tilde{P}_{k,V}^{N}||_{1}$.

The algorithm will hence compute $||\tilde{P}_{k,V}^{j}||_{1}$ for each integer $j>0$,
computing $\tilde{P}_{k,V}^{j}$ iteratively from $\tilde{P}_{k,V}^{j-1}$, until it finds some $j$ for
which it can deduce $||P_{k}^{N}|_{I^{-1}(V)}||_{1}<\frac{1}{2}$.  This $j$ will be output as the $N$ required in
item b) of Theorem \ref{gen}.

\subsection{The algorithm}

We now present informally the general algorithm which arises from the
previous considerations for the approximation of invariant measures by our fixed point stability result. More
details on the implementation, in particular cases, are given for each step in the following
subsections.

\begin{TextAlgorithm}
\label{A1}The algorithm  works as follows:

\begin{enumerate}
\item Input the map and the partition.

\item Compute the matrix $\tilde{P}_{k}$ approximating  $L_{\delta }$ and the corresponding approximated fixed
point $\tilde{f}_{\delta }$  up to some required
approximation $\epsilon _{1}$

\item Compute $\Delta L$, an estimation for $||L_{\delta }f-Lf||_{L^{1}}$ 
up to some error $\epsilon _{2}$

\item Compute $N$ such that item $b)$ of Theorem \ref{gen} is verified as
described in item I3 above

\item If all computations end successfully, output $\tilde{f}_{\delta }$.
\end{enumerate}
\end{TextAlgorithm}

All was said before allows us to state the following

\begin{proposition}\label{11121}
$I^{-1}(\tilde{f}_{\delta })$ is an approximation of one invariant measure in $\cal B$, up to an error $\epsilon $ given by:%
\begin{equation*}
\epsilon \leq \epsilon _{1}+2N(\Delta L+\epsilon _{2})
\end{equation*}%
in the $L^{1}$ norm.
\end{proposition}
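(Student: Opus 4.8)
The plan is to read this Proposition as a direct accounting of Theorem~\ref{gen} together with the four reductions I1--I4, the only genuinely new ingredient being the separate bookkeeping of the computational error $\epsilon_1$ incurred when producing $\tilde f_\delta$ in Algorithm~\ref{A1}.

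First I would pin down the objects. Let $f\in\mathcal{B}=L^{1}(X)$ be an invariant density of $L$ and let $f_{\delta}$ be the fixed point of the Ulam operator $L_{\delta}$, both normalized as probability densities so that $\int f\,\mathrm{d}m=\int f_{\delta}\,\mathrm{d}m=1$. Consequently $f-f_{\delta}$ has zero average, i.e.\ $f-f_{\delta}\in V$, which is precisely the subspace on which the contraction estimate of item I3 is designed to act; this is the content of the remark that b) rules out a spurious projection of $f$ onto fixed points of $L_{\delta}$ other than $f_{\delta}$.

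Next I would check that the three hypotheses of Theorem~\ref{gen} hold with explicit constants. For item a) I take $\epsilon=\Delta L+\epsilon_{2}$: by definition $\Delta L$ is a numerical estimate of $\|L_{\delta}f-Lf\|_{L^{1}}$ accurate to within $\epsilon_{2}$, hence $\|L_{\delta}f-Lf\|_{L^{1}}\le\Delta L+\epsilon_{2}$. For item b) I use the integer $N$ produced by the algorithm: by the identification of Subsection~\ref{secitem3} one has $\|L_{\delta}^{N}|_{V}\|_{L^{1}\to L^{1}}=\|P_{k}^{N}|_{I^{-1}(V)}\|_{1}$, and $N$ is chosen as the first index making this $1$-operator norm less than $1/2$; since $f-f_{\delta}\in V$ this gives $\|L_{\delta}^{N}(f_{\delta}-f)\|_{L^{1}}\le\tfrac{1}{2}\|f_{\delta}-f\|_{L^{1}}$. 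For item c) I invoke I4, namely $C_{i}=1$, which holds because $L_{\delta}$ is a composition of conditional expectations and is therefore an $L^{1}$ contraction. Applying Theorem~\ref{gen} then yields
\begin{equation*}
\|f_{\delta}-f\|_{L^{1}}\le 2(\Delta L+\epsilon_{2})\sum_{i\in[0,N-1]}C_{i}=2N(\Delta L+\epsilon_{2}),
\end{equation*}
and since the computed density $\tilde f_{\delta}$ satisfies $\|\tilde f_{\delta}-f_{\delta}\|_{L^{1}}\le\epsilon_{1}$, the triangle inequality closes the argument:
\begin{equation*}
\|\tilde f_{\delta}-f\|_{L^{1}}\le\|\tilde f_{\delta}-f_{\delta}\|_{L^{1}}+\|f_{\delta}-f\|_{L^{1}}\le\epsilon_{1}+2N(\Delta L+\epsilon_{2}).
\end{equation*}
Because $I$ is an isometry, this $L^{1}$ bound transfers unchanged between the vector representation $I^{-1}(\tilde f_{\delta})$ and its piecewise-constant density, so the stated error controls the approximation of the invariant measure of $L$.

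The main obstacle is not any of these inequalities, which are routine, but the rigor of the \emph{computed} bounds that feed into them: one must be certain that the matrix $M$ actually over-estimates $\|P_{k}|_{I^{-1}(V)}\|_{1}$ (so that the computed $N$ genuinely certifies b)), that $\Delta L+\epsilon_{2}$ is a true upper bound for the item-a) quantity, and that both $f$ and $f_{\delta}$ are normalized to unit mass so that $f-f_{\delta}\in V$. These points are exactly what Subsection~\ref{secitem3} and the reductions I1--I4 are meant to guarantee, so in the present argument they may simply be invoked.
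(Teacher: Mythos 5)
Your proposal is correct and is precisely the argument the paper intends: the paper states this Proposition without a written proof (``All was said before allows us to state the following''), and your assembly of Theorem~\ref{gen} with $\epsilon=\Delta L+\epsilon_{2}$, the contraction integer $N$ from item I3, the constants $C_{i}=1$ from item I4, and the final triangle inequality absorbing the numerical error $\epsilon_{1}$ is exactly that intended derivation. The normalization remark ensuring $f-f_{\delta}\in V$ and the isometry of $I$ are the right details to make explicit, so nothing is missing.
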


Of course is possible that some computation will not stop or that the approximation error, estimated above is not satisfying.
In this case the algorithm will be started again with a finer partition. 
With some a priori estimate on $N$ is possible to prove that in certain cases the computations will stop and the error will go to zero as $\delta \to 0$  (and even estimate the rate of convergence), see section \ref{sec:AlgorithmWorks} .

\section{The piecewise expanding case}

\label{subsec:pwexp}

We now enter in more details, showing how the previously explained
algorithm works in a concrete but nontrivial family of cases, where all the
required computations and estimations can be done.

Let
\begin{equation*}
||\mu ||=\underset{\phi \in C^{1},|\phi |_{\infty }=1}{\sup |\mu (\phi
^{\prime })|}
\end{equation*}%
this is related to bounded variation\footnote{ Recall that the variation of a function $g$ is defined as 
$$var(g)=\sup_{(x_{i})\in \textrm{Finite subdivisions of $[0,1]$}} \sum_{i\leq n}|g(x_{i})-g(x_{i+1})|. $$}:
\begin{comment}
:if $\mu $ has $BV$ density $f$ then it is easy to show that $var(f)=||\mu||.$
\end{comment}
if $||\mu||<\infty$ then $\mu$ is absolutely continuous with respect to
Lebesgue measure with $BV$ density (see \cite{L2}).

In this case $X=[0,1]$, $\mathcal{B}^{\prime }=\{\mu ,  ||\mu||<\infty  \}$. 
The dynamics we will consider is defined by a map satisfying the following requirements:

\begin{definition}
We will call a nonsingular function $T:([0,1],m)\rightarrow ([0,1],m)$ piecewise expanding  if

\begin{itemize}
\item There is a finite set of points $d_{1}=0,d_{2},...,d_{n}=1$ such that  for each $i$, $T|_{(d_{i},d_{i+1})}$ is $C^{2}$ and $\int_{[0,1]} \frac{|T''|}{(T')^2}dx<\infty$.

\item $\inf_{x\in \lbrack 0,1]}|T'(x)|>2$ on the set where it is defined.

\end{itemize}
\end{definition}

We remark that usually the definition of piecewise expanding map is weaker, in particular it is supposed $\inf_{x\in \lbrack 0,1]}|D_{x}T|>1$ for some iterate.
In concrete examples it  can be  supposed that the derivative is bigger than $2$ by considering
some iterate of $T$ ( the physical measure of the iterate is the
same).

We suppose that the map is computable, in the sense that we can compute the
probabilities $P_{ij}$ defined in \eqref{pij} up to any given accuracy. This
is the case for example, if the map has branches which are given by
analytic functions with computable coefficients.

Piecewise expanding maps have a finite set of
ergodic absolutely continuous invariant measures with bounded variation
density. If the map is topologically mixing such invariant measure is unique.

Such densities are also fixed points of the (Perron Frobenius) operator%
\footnote{Note that this operator corresponds to the above defined transfer operator,
but it acts on densities instead of measures.} $L:L^{1}[0,1]\rightarrow
L^{1}[0,1]$ defined by

\begin{equation*}
\lbrack Lf](x)=\sum_{y\in T^{-1}(x)}\frac{f(y)}{|T^{\prime }(y)|}.
\end{equation*}%

We now explain how to face all the points raised in the concrete
implementation of Algorithm \ref{A1}.

\subsubsection{About Item I1\label{secitem1}}

In this section we obtain an explicit estimation of the coefficients of the Lasota Yorke inequality for piecewise expanding maps.
We follow the approach of \cite{L2}, trying to optimize the size of the constants.

\begin{theorem}\label{th8} If $T$ is piecewise expanding as above and $\mu$ is a measure on $[0,1]$
\begin{equation*}
||L\mu ||\leq \frac{2}{_{\inf T^{\prime }}}||\mu ||+\frac{2}{\min
(d_{i}-d_{i+1})}\mu (1)+2\mu (|\frac{T^{\prime \prime }}{(T^{\prime })^{2}}%
|).
\end{equation*}
\end{theorem}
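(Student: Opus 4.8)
The plan is to unwind the definition of the dual norm and reduce the whole estimate to a branch-by-branch integration by parts. Since the inequality is vacuous when $\|\mu\|=\infty$, I may assume $\|\mu\|<\infty$, so by the remark preceding the statement $\mu$ is absolutely continuous with $BV$ density and in particular has no atoms; this will let me manipulate piecewise pointwise derivatives freely under $\mu$. I will also work with $\mu\ge 0$, which is the relevant (invariant density) case. By definition $\|L\mu\|=\sup\{L\mu(\phi'):\phi\in C^{1},\ |\phi|_{\infty}=1\}$, and the defining duality of the transfer operator gives $L\mu(\phi')=\mu(\phi'\circ T)$. So I fix one admissible $\phi$ with $|\phi|_{\infty}\le 1$, estimate $\mu(\phi'\circ T)$ by the three-term bound, and take the supremum over $\phi$ at the end.

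First I would localize to the branches $I_{i}=(d_{i},d_{i+1})$ on which $T$ is $C^{2}$ and strictly monotone (monotonicity being forced by $\inf|T'|>2>0$). On each $I_{i}$ the chain and product rules give the pointwise identity
\begin{equation*}
\phi'\circ T=\frac{d}{dx}\left(\frac{\phi\circ T}{T'}\right)+(\phi\circ T)\,\frac{T''}{(T')^{2}},
\end{equation*}
which splits $\mu(\phi'\circ T)$ into a \emph{contraction term} $\mu(\Phi')$, where $\Phi:=\frac{\phi\circ T}{T'}$ is understood piecewise, and a \emph{distortion term} $\mu\big((\phi\circ T)\,T''/(T')^{2}\big)$. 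The distortion term is the easy one: since $|\phi\circ T|\le 1$ it is bounded by $\mu(|T''/(T')^{2}|)$, which is finite by the integrability hypothesis on $T''/(T')^{2}$, and it supplies (one copy of) the last term of the claim.

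The heart of the argument is the contraction term $\mu(\Phi')$. Here $\Phi$ is $C^{1}$ on each $I_{i}$ but jumps across the partition points $d_{i}$, so $\Phi$ is not an admissible test function and $\Phi'$ cannot be fed directly into the definition of $\|\mu\|$. My plan is to replace $\Phi$ by a genuine $C^{1}$ function $\psi$ obtained by correcting the jumps, arranged so that: (i) $\mu(\Phi')=\mu(\psi')+(\text{explicit boundary terms})$, using that $\mu$ charges no atom at the $d_{i}$; (ii) $\|\psi\|_{\infty}$ is controlled through $\sup|\Phi|\le 1/\inf|T'|$, so that $|\mu(\psi')|\le\|\psi\|_{\infty}\|\mu\|$ yields the $\tfrac{1}{\inf T'}\|\mu\|$ scaling; and (iii) bridging a jump across a branch of length $\ge\min_{i}|d_{i+1}-d_{i}|$ costs a boundary contribution of order $\tfrac{1}{\min_{i}|d_{i+1}-d_{i}|}\,\mu(1)$ at each endpoint, while the derivative of the correction reintroduces a factor $T''/(T')^{2}$, producing the second copy of $\mu(|T''/(T')^{2}|)$. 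The two endpoints of every branch are what turn the bare constants $\tfrac{1}{\inf T'}$, $\tfrac{1}{\min|I_i|}$, $1$ into the stated $\tfrac{2}{\inf T'}$, $\tfrac{2}{\min|I_i|}$, $2$.

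I expect the constant-tracking in (ii)–(iii) to be the main obstacle: the delicate point is to carry out the jump correction so that no uncontrolled mass is created and so that one extracts exactly the sharp factor $2$ rather than a larger constant (a naive estimate of $\sup|f|$ by $\tfrac{1}{|I_i|}\int_{I_i}|f|+\mathrm{var}_{I_i}(f)$ loses here and gives $3$ instead of $2$). This is precisely where the hypothesis $\inf|T'|>2$ enters and where the optimization of constants along the lines of \cite{L2} must be done carefully. Once the three contributions are bounded on each $I_{i}$, summing over branches and taking the supremum over admissible $\phi$ assembles the stated inequality.
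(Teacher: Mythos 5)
Your proposal is correct and takes essentially the same route as the paper's proof (which follows \cite{L2}): the dual-norm formulation with a $C^1$ test function $\phi$, a branch-by-branch splitting, the pointwise identity $\phi'\circ T=\left(\frac{\phi\circ T}{T'}\right)'+(\phi\circ T)\frac{T''}{(T')^2}$, and a piecewise-linear interpolation correction that removes the jumps so that the resulting continuous, piecewise-$C^1$ function can be fed (after $C^1$-smoothing) into the duality bound $|\mu(\psi')|\le \|\mu\|\,|\psi|_{\infty}$. The only difference is organizational: you apply the chain-rule identity to $\phi\circ T$ first and then bridge the jumps of $\Phi=\frac{\phi\circ T}{T'}$ (so the distortion constant arises as $1+1$ and the $\mu(1)$ term comes from the bridging slopes), whereas the paper subtracts from $\phi$ its linear interpolation $\phi_Z$ on each $Z=(d_i,d_{i+1})$ before composing with $T$ and applies the identity to $\psi_Z\circ T$ (so the factor $2$ enters at once via $|\psi_Z|\le 2|\phi|_{\infty}$); both bookkeepings produce the stated constants.
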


\begin{proof}
Remark that 
\begin{equation*}
L\mu (\phi ^{\prime })=\sum_{Z\in \{(d_{i},d_{i+1})|i\in (1,...,n-1)\}}L\mu
(\phi ^{\prime }\chi _{Z})
\end{equation*}
since $L\mu $ gives zero weight to the points $d_{i}$ ($L\mu $ is
absolutely continuous).

For each such $Z$ define $\phi_{Z}$ to be linear and such that $\phi
_{Z}=\phi $ on $\partial Z$, then define $\psi _{Z}=\phi -\phi_{Z},$ on $Z$, and extend it 
to $[0,1]$ by setting it to zero outside $Z$. This is a
continuous function. Moreover for each $x\in Z$%
\begin{equation*}
|\phi_{Z}^{\prime }|_{\infty }\leq \frac{2|\phi |_{\infty }}{\min
(d_{i}-d_{i+1})}
\end{equation*}%
Thus%
\begin{equation*}
|L\mu (\phi ^{\prime })|=|\sum_{Z}\mu (\psi _{Z}^{\prime }\circ T~\chi
_{T^{-1}(Z)})+\mu (\phi_{Z}^{\prime }\circ T~\chi _{T^{-1}(Z)})|
\end{equation*}%
now remark that, on $Z$ , $\psi _{Z}^{\prime }\circ T=(\frac{\psi _{Z}\circ T%
}{T^{\prime }})^{\prime }+\frac{(\psi _{Z}\circ T)T^{\prime \prime }}{%
(T^{\prime })^{2}},$then 
\begin{align*}
|L\mu (\phi ^{\prime })| \leq&|\sum_{Z}\mu ((\frac{\psi _{Z}\circ T}{%
T^{\prime }})^{\prime }~\chi _{T^{-1}(Z)})|+|\sum_{Z}\mu (\frac{(\psi_{Z} \circ
T)T^{\prime \prime }}{(T^{\prime })^{2}}~\chi _{T^{-1}(Z)})|\\&+\frac{2|\phi
|_{\infty }}{\min (d_{i}-d_{i+1})}\mu (1) \\
\leq& |\mu ((\frac{\psi _{Z}\circ T}{T^{\prime }})^{\prime
})|+2|\phi |_{\infty }\mu (|\frac{T^{\prime \prime }}{(T^{\prime })^{2}}|)+%
\frac{2|\phi |_{\infty }}{\min (d_{i}-d_{i+1})}\mu (1).
\end{align*}%
$\sum_{Z}\frac{\psi _{Z}\circ T}{T^{\prime }}$ is not $C^{1}$, but it can be
approximated as well as wanted by a $C^{1}$ function $\psi _{\epsilon }$
such that $|\psi _{\epsilon }-\sum_{Z}(\frac{\psi _{Z}\circ T}{T^{\prime }}%
)|_{\infty }$ and $\mu (|\psi _{\epsilon }-\sum_{Z}(\frac{\psi _{Z}\circ T}{%
T^{\prime }})|)$ are as small as wanted. Hence 
\begin{equation*}
|\mu ((\frac{\psi _{Z}\circ T}{T^{\prime }})^{\prime })|\leq ||\mu
||~|\frac{\psi _{Z}\circ T}{T^{\prime }}|_{\infty }\leq ||\mu ||~%
\frac{2}{_{\inf T^{\prime }}}|\phi |_{\infty }
\end{equation*}%
and%
\begin{eqnarray*}
|L\mu (\phi ^{\prime })| &\leq &||\mu ||~\frac{2}{_{\inf T^{\prime }}}|\phi
|_{\infty }+2|\phi |_{\infty }\mu (|\frac{T^{\prime \prime }}{(T^{\prime
})^{2}}|)+\frac{2|\phi |_{\infty }}{\min (d_{i}-d_{i+1})}\mu (1) \\
||L\mu || &\leq &\frac{2}{_{\inf T^{\prime }}}||\mu ||+\frac{2}{\min
(d_{i}-d_{i+1})}\mu (1)+2\mu (|\frac{T^{\prime \prime }}{(T^{\prime })^{2}}|)
\end{eqnarray*}
\end{proof}

\begin{remark}
We remark that from the above statement
%\[||L\mu ||\leq \frac{2}{_{\inf T^{\prime }}}||\mu ||+%
%\frac{2}{\min (d_{i}-d_{i+1})}\mu (1)+2\mu \bigg(\bigg|\frac{T^{\prime \prime }}{%
%(T^{\prime })^{2}}\bigg|\bigg)\] 
it is easy to extract 
\[||L\mu ||\leq \frac{2}{_{\inf
T^{\prime }}}||\mu ||+\bigg(\frac{2}{\min (d_{i}-d_{i+1})}+2\bigg|\frac{T^{\prime
\prime }}{(T^{\prime })^{2}}\bigg|_{\infty }\bigg)|\mu |_{1}\] \ Where $|\mu |_{1}=%
\underset{|\phi |_{\infty }=1}{\sup |\mu (\phi )|}$ coincides with the $%
L^{1} $ norm for a density of $\mu $.
\end{remark}
\begin{remark}\label{rem:Bprime}
From now on, the following notation is going to be used throughout the paper
\begin{equation}
\lambda:= \frac{1}{_{\inf T^{\prime }}} \quad B':=\frac{2}{\min
(d_{i}-d_{i+1})}+2\bigg|\frac{T^{\prime
\prime }}{(T^{\prime })^{2}}\bigg|_{\infty }.
\end{equation}
These constant plays a central role in our treatment and $B'$ the biggest obstruction in getting
good estimates for the rigorous error. 
\end{remark}
\begin{comment}
 \begin{remark}
Looking closely to the last inequality in Theorem \ref{th8} we see that, if we could estimate
rigorously the integral w.r.t. the invariant measure of the distortion of the map, we could get a
better estimate for the coefficient $B'$. 
A priori, this is not possible but once we have computed rigorously the invariant measure we can
estimate the integral of the distortion and use it to redefine
\[B':=\frac{2}{\min (d_{i}-d_{i+1})}+2\mu\bigg(\frac{T^{\prime
\prime }}{(T^{\prime })^{2}}\bigg).\] 
to get a sharper bound on the rigorous error.
\end{remark}
\end{comment}

We remark that once an inequality of the form
\begin{equation*}
||Lg||_{{\cal B} '}\leq 2\lambda ||g||_{{\cal B} '}+B^{\prime }||g||_{\cal B}.
\end{equation*}
 is established (with $2\lambda <1$) then, iterating, we have
%\[||L^{2}g||_{{\cal B} '}\leq 2\lambda ||Lg||_{{\cal B} '}+B^{\prime }||Lg||_{1}\leq 2\lambda (2\lambda
%||Lg||_{{\cal B} '}+B^{\prime }||g||_{1})+B^{\prime }||g||_{\cal B}\]
% and thus 
\begin{equation*}
||L^{n}g||_{{\cal B} '}\leq 2^{n}\lambda ^{n}||Lg||_{{\cal B} '}+\frac{1}{1-2\lambda }B^{\prime
}||g||_{\cal B}
\end{equation*}%
obtaining the inequality in the form required at \eqref{111} and the
coefficient 
\[B=\frac{1}{1-2\lambda }B^{\prime
}\] 
which bounds $||f||$  from above, in our algorithm.

\subsubsection{About item I2\label{secitem2}}

As outlined before, on the interval $[0,1]$ we consider a partition made of intervals having length $\delta$.   As remarked in Item I2  we need an estimation on the quality of approximation by Ulam discretization.

\begin{lemma}

For piecewise expanding maps, if $L_{\delta }$ is given by the Ulam discretization as explained before and $f \in BV[0,1]$ is a fixed point of $L$ we have that
\begin{equation*}
||Lf-L_{\delta }f||_{L^{1}}\leq 2\delta ||f||
\end{equation*}
\end{lemma}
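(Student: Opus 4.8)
The plan is to exploit the representation $L_{\delta}(f)=\mathbf{E}(L(\mathbf{E}(f|F_{\delta}))|F_{\delta})$ from \eqref{000}, the hypothesis that $f=Lf$, and two elementary contraction properties: both the Perron--Frobenius operator $L$ and the conditional expectation $\mathbf{E}(\cdot|F_{\delta})$ are contractions in the $L^{1}$ norm (the former because $\int Lg = \int g$ and $|Lg|\le L|g|$, the latter by Jensen). The single quantitative ingredient that does the real work is a bound on how well a $BV$ function is approximated in $L^{1}$ by its conditional expectation on the partition.

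Concretely, I would first establish the lemma: for any $g$,
\begin{equation*}
\|g-\mathbf{E}(g|F_{\delta})\|_{L^{1}}\leq \delta\,\mathrm{var}(g).
\end{equation*}
On each partition interval $I_{j}$ of length $\delta$, the value $\mathbf{E}(g|F_{\delta})$ is the mean $\bar g_{j}=\frac{1}{\delta}\int_{I_{j}}g$, which lies between the infimum and supremum of $g$ on $I_{j}$; hence $|g(x)-\bar g_{j}|\leq \mathrm{var}_{I_{j}}(g)$ for $x\in I_{j}$. Integrating over $I_{j}$ gives $\int_{I_{j}}|g-\bar g_{j}|\,dx\leq \delta\,\mathrm{var}_{I_{j}}(g)$, and summing over $j$ yields the claim since the variations over the subintervals add up to at most $\mathrm{var}(g)$. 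Recalling that $\mathrm{var}(g)=\|g\|$ for the $\mathcal{B}'$ norm used here, this reads $\|g-\mathbf{E}(g|F_{\delta})\|_{L^{1}}\leq \delta\|g\|$.

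With this in hand I would decompose the error, using $f=Lf$, by inserting and removing $\mathbf{E}(Lf|F_{\delta})$:
\begin{equation*}
Lf-L_{\delta}f=\bigl(f-\mathbf{E}(f|F_{\delta})\bigr)+\mathbf{E}\bigl(L(f-\mathbf{E}(f|F_{\delta}))\,\big|\,F_{\delta}\bigr).
\end{equation*}
The first term has $L^{1}$ norm at most $\delta\|f\|$ directly from the lemma. For the second term, since $\mathbf{E}(\cdot|F_{\delta})$ and $L$ each contract the $L^{1}$ norm, it is bounded by $\|f-\mathbf{E}(f|F_{\delta})\|_{L^{1}}\leq \delta\|f\|$ as well; adding the two contributions gives the asserted $2\delta\|f\|$. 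The main conceptual obstacle is the approximation lemma, and in particular the observation that one should estimate the conditional-expectation defect against the \emph{variation}; everything else is routine contraction bookkeeping. The one genuinely useful trick worth highlighting is that the fixed-point identity $Lf=f$ lets us control $\mathrm{var}(Lf)$ by $\mathrm{var}(f)=\|f\|$ without invoking the Lasota--Yorke inequality to bound $\mathrm{var}(Lf)$ directly.
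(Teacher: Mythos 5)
Your proposal is correct and follows essentially the same route as the paper: the same insertion of $\mathbf{E}(Lf|F_{\delta})$ (equivalently, the decomposition $Lf-L_{\delta}f=(f-\mathbf{E}(f|F_{\delta}))+\mathbf{E}(L(f-\mathbf{E}(f|F_{\delta}))|F_{\delta})$), the same use of the $L^{1}$-contraction of $L$ and of conditional expectation, and the same key estimate $\|g-\mathbf{E}(g|F_{\delta})\|_{L^{1}}\leq\delta\,\mathrm{var}(g)$ proved interval-by-interval via the fact that the mean lies between the infimum and supremum. Your closing remark is also apt: the paper likewise relies on $Lf=f$ so that only $\mathrm{var}(f)$, not $\mathrm{var}(Lf)$, ever needs to be controlled.
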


\begin{proof}
Recalling that $Lf=f$, it holds
\begin{equation*} 
||(L-L_{\delta })f||_{L^1}\leq ||\mathbf{E}(L(\mathbf{E}(f|\mathcal{F}_{\delta
})|\mathcal{F}_{\delta }))-\mathbf{E}(Lf|\mathcal{F}_{\delta }))||_{L^1}+||%
\mathbf{E}(f|\mathcal{F}_{\delta })-f||_{L^1},
\end{equation*}%
But $$\mathbf{E}(L(\mathbf{E}(f|\mathcal{F}_{\delta
})|\mathcal{F}_{\delta }))-\mathbf{E}(Lf|\mathcal{F}_{\delta }))=\mathbf{E}[L(\mathbf{E}(f|\mathcal{F}_{\delta })-f)|\mathcal{F}_{\delta }].$$
Since both $L$ and the conditional expectation are $L^1$ contractions
$$||(L-L_{\delta })f||_{L^1}\leq 2||\mathbf{E}(f|\mathcal{F}_{\delta })-f||_{L^1} .$$

It is not difficult to see that  for $f\in \mathcal{B}'$, holds 
\begin{equation*}
||\mathbf{E}(f|\mathcal{F}_{\delta })-f||_{L^1}\leq \delta \cdot ||f||.
\end{equation*}

Indeed from the definition of the norm we can see that $||f||\geq \sum_i |sup_{I_i}(f)-inf_{I_i}(f)|$, where $I_i$ are the various intervals composing $\mathcal{F}$.

By this, since $sup_{I_i}(f)\geq \mathbf{E}(f|I_i) \geq inf_{I_i}(f) $, it follows $\int _{I_i } |\mathbf{E}(f|\mathcal{F}_{\delta })-f|\leq \delta |sup_{I_i}(f)-inf_{I_i}(f)|$ and the above equation follows.

%$Then  $||(L-L_{\delta })f||_{L^1}\leq 2\delta var(f) $. By approximating as near as want $f$ in the  $L^1$ norm,  $with a $C^1$ function having the same variation\footnote{Consider the decomposition of $f$ as a difference of $monotone functions, and then approximate each one with a  $C^1$ function having the same variation... }  we $can suppose $f$ to be $C^1$.
By this
$$||(L-L_{\delta })f||_{L^1}\leq  2\delta ||f||.$$

\end{proof}

\begin{remark}\label{12} This gives the  estimate which is needed at Item 3 of algorithm \ref{A1}.
We have that, when $f$ is an invariant measure, the inequality implies (see Equation \ref{exrem} )
\begin{equation*}
||Lf-L_{\delta }f||_{{L^1}}\leq  \frac{ 2}{k}B,
\end{equation*}
\end{remark}

\subsubsection{About item I4\label{secitem4}}

It is easy to see that if $L_{\delta }$ is given by the Ulam method 
\begin{equation*}
||L_{\delta }f||_{L^{1}}\leq ||f||_{L^{1}};
\end{equation*}
indeed $||Lf||_{L^{1}}\leq ||f||_{L^{1}}$ and $||E(f|F_{\delta
})||_{L^{1}}\leq ||f||_{L^{1}}$ as seen in Section \ref{secitem3} and $%
L_{\delta }$ comes from the composition of such functions.

\subsection{The algorithm works}\label{sec:AlgorithmWorks}
We show that the described algorithm can provide an estimation of the invariant measure with an error as small as wanted, if the size of the grid $\delta $ is chosen small enough.

\begin{theorem}
It is possible to compute the invariant measure of a topologically mixing
piecewise expanding map at any precision with our algorithm.
\end{theorem}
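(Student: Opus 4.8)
The plan is to show that every quantity controlling the final error bound can be made arbitrarily small by taking the partition mesh $\delta=1/k$ small enough, so that applying the Proposition yields an error below any prescribed threshold. The overall error from the Proposition is
\begin{equation*}
\epsilon\leq \epsilon_1+2N(\Delta L+\epsilon_2),
\end{equation*}
where $\epsilon_1,\epsilon_2$ are computational accuracies we may set freely, $\Delta L$ estimates $\|L_\delta f-Lf\|_{L^1}$, and $N$ is the contraction time from item b) of Theorem \ref{gen}. So the heart of the argument is to control $N$ and $\Delta L$ as $\delta\to 0$.

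First I would handle $\Delta L$. By the Lemma of Section \ref{secitem2} together with Remark \ref{12}, when $f$ is the invariant density we have $\|Lf-L_\delta f\|_{L^1}\leq 2\delta\|f\|\leq \frac{2}{k}B$, using the a priori bound $\|f\|\leq B$ from Remark \ref{3}. Thus $\Delta L=O(\delta)$ and tends to $0$ as the partition is refined. Second, and this is the crux, I would control $N$. The quantity $N$ is the smallest integer with $\|L_\delta^N|_V\|_{L^1\to L^1}<\tfrac12$, equivalently $\|P_k^N|_V\|_1<\tfrac12$ on zero-average vectors. The key point is that $L_\delta$ inherits a Lasota--Yorke inequality uniformly in $\delta$: since $L_\delta=\mathbf{E}(L(\mathbf{E}(\cdot|F_\delta))|F_\delta)$, conditional expectation does not increase the $\mathcal B'$ (variation) norm while it is an $L^1$ contraction, so the inequality \eqref{111} passes to $L_\delta$ with the \emph{same} constants $\lambda<1$ and $B$. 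Combined with the fact that the true operator $L$ is contracting on $V$ (topological mixing gives a spectral gap on zero-average densities), a uniform Lasota--Yorke inequality plus the smallness of $\|L-L_\delta\|_{\mathcal B'\to L^1}$ forces $L_\delta$ to contract $V$ at a rate bounded independently of $\delta$. Hence there is an $N$, uniform in $\delta$ (or at worst growing slower than $1/\delta$), achieving the factor $\tfrac12$.

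The main obstacle is precisely establishing this uniform contraction on $V$ for the discretized operators: one must convert the contraction of the genuine operator $L$ on zero-average measures (which follows from the spectral gap guaranteed by topological mixing and the Lasota--Yorke inequality, via the classical Ionescu-Tulcea--Marinescu / Hennion argument) into a contraction of $L_\delta$ on zero-average \emph{vectors}, controlling the perturbation $L-L_\delta$ uniformly. The mechanism is that $\|L^n|_V\|_{L^1}\to 0$, and for any fixed such $n$ we have $\|L_\delta^n-L^n\|_{\mathcal B'\to L^1}\to 0$ as $\delta\to0$ by the approximation Lemma applied iteratively together with the uniform Lasota--Yorke bound; choosing $n$ with $\|L^n|_V\|<\tfrac14$ and then $\delta$ small enough that the perturbation on the relevant (norm-bounded) ball is below $\tfrac14$ yields $\|L_\delta^N|_V\|<\tfrac12$ with $N=n$ fixed.

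To finish, I would assemble the pieces: fix a target error $\eta>0$; choose and fix $N$ as above so that item b) of Theorem \ref{gen} holds uniformly for all small $\delta$; then shrink $\delta$ so that $2N\,\Delta L=2N\cdot\frac{2}{k}B\leq \eta/3$; finally set the computational tolerances $\epsilon_1\leq\eta/3$ and $2N\epsilon_2\leq\eta/3$, which is possible because the matrix $P_k$, its approximate fixed point $\tilde f_\delta$, and the norms $\|M^j\|_1$ are all computable to arbitrary accuracy under the computability hypothesis on $T$. The Proposition then gives $\epsilon\leq\eta$, and since $\eta$ was arbitrary the invariant measure is computed at any precision, as claimed.
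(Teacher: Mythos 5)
Your treatment of $\Delta L$ (via the Lemma of Section \ref{secitem2} and Remarks \ref{3} and \ref{12}) and of the computational tolerances matches the paper, but the step you yourself identify as the crux --- controlling $N$ --- contains a genuine error, not just a gap. You claim that $\|L^n|_V\|_{L^1\to L^1}\to 0$ and that the perturbation argument yields a contraction time $N=n$ that is \emph{uniform} in $\delta$. Both claims fail because of a norm mismatch. The spectral gap of $L$ for piecewise expanding maps lives on $BV$, i.e.\ $\|L^n|_V\|_{BV\to L^1}\to 0$; on $L^1$ the transfer operator has no spectral gap and $\|L^n|_V\|_{L^1\to L^1}$ does not tend to zero. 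Consequently what the perturbation estimate $\|L^n|_V-L_\delta^n|_V\|_{BV\to L^1}\leq nC_2\delta$ buys is $\|L_\delta^n|_V\|_{BV\to L^1}\leq 1/2$ for fixed $n$ and small $\delta$; but item b) of Theorem \ref{gen}, as implemented in item I3, requires $\|L_\delta^N|_V\|_{L^1\to L^1}<1/2$ on the space of zero-average \emph{piecewise constant} vectors, and such a vector of unit $L^1$ norm can have $BV$ norm as large as $2\delta^{-1}$. So for fixed $n$ the $BV\to L^1$ bound gives only $\|L_\delta^n g\|_{L^1}\lesssim \delta^{-1}\|g\|_{L^1}$, which is useless. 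The failure is not merely technical: for $T(x)=4x \bmod 1$ with a homogeneous partition in $k$ intervals, the zero-average vector $e_1-e_2$ keeps its full $L^1$ norm under iteration of $P_k$ as long as the (disjoint) images of $I_1$ and $I_2$ have not wrapped around the whole interval, which takes on the order of $\log k$ iterations; hence $N(\delta)\to\infty$ as $\delta\to 0$ and no uniform $N$ exists. Your final assembly (``choose and fix $N$, then shrink $\delta$'') therefore breaks down; your parenthetical hedge that $N$ might grow ``slower than $1/\delta$'' is the true statement, but your mechanism does not establish it, and your conclusion explicitly relies on uniformity. (Your observation that $L_\delta$ inherits the Lasota--Yorke inequality with the same constants is correct, but it does not rescue the argument: even with it, regularizing a piecewise constant vector of $BV$ norm $\sim\delta^{-1}$ down to order $B$ already costs $\sim\log(1/\delta)$ iterations.)

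The paper closes exactly this hole. After obtaining $\|L_\delta^n|_V\|_{BV\to L^1}\leq 1/2$ for fixed $n$ and $\delta\leq 1/(4C_2n)$, it converts to the correct norm using the inverse estimate $\|g\|_{BV}\leq 2\delta^{-1}\|g\|_{L^1}$, valid for $g$ constant on the partition elements: iterating $m$ blocks of length $n$ gives $\|L_\delta^{nm}|_V\|_{L^1\to L^1}\leq 2\delta^{-1}(1/2)^m$, so $N(\delta)=n\,m(\delta)$ works with $m(\delta)\leq \log(2\delta^{-1})+1$. The point is that $N(\delta)$ is allowed to diverge logarithmically, because what must go to zero is the product $2\delta N(\delta)\leq 2n\delta\,(\log(2\delta^{-1})+1)\to 0$. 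So the theorem is proved by this joint estimate $\delta N(\delta)\to 0$, not by fixing $N$ first and then shrinking $\delta$; your proof needs to be repaired along these lines.
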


\begin{proof}

Since $L$ and $L_{\delta }$ satisfy the same Lasota Yorke inequality and $||L-L_{\delta }||_{BV\rightarrow L^{1}}\rightarrow 0$ as $\delta \rightarrow
0$, then by \cite{L} (see Proposition 3.1 and Lemma 6.1 ) the spectral gap of $L$ combined with the stability of the spectral picture, implies that  there are $
A,\lambda \in \mathbb{R},\lambda<1 $ independend of $\delta$, such that for $\delta $ small enough,  $L_{\delta }$ satisfies $||L_{\delta
}^{n}|_{V}||_{BV\rightarrow BV}\leq A\lambda ^{n}$.

Since $||\mathbf{E}(g|\mathcal{F}_{\delta
}||_{1}\geq 2
\delta ^{-1}||\mathbf{E}(g|\mathcal{F}_{\delta
}||$ this implie%
\[
||L_{\delta }^{n}||_{L^1\to L^1}\leq 2 \delta ^{-1}||L_{\delta }^{n}||_{BV\rightarrow BV}\leq 2 \delta ^{-1}A\lambda ^{n}.
\]

\noindent Hence if $n\geq \frac{\log (4A)^{-1} \delta }{\log \lambda }$, $||L_{\delta
}^{n}||_{L^1\to L^1}\leq \frac{1}{2}$. And the algorithm stop.
Moreover by Proposition \ref{11121} and Remark \ref{12} we have that up to multiplying constants, the error will be of the order $O(\delta \log \delta^{-1})$, and can be made as small as wanted as $\delta \to 0$.
\end{proof}

\begin{remark}
We remark that the above proof gives a rate of approximation of the order $O(\delta \log \delta^{-1})$ this is indeed the optimal rate of approximation for the Ulam approximation for piecewise expanding maps, as proved in \cite{BM}.  
\end{remark}

\section{Higher regularity, and $L^\infty$ estimations}\label{linfty}

In this section we explain an implementation of the general strategy to compute  invariant
measures with a rigorous error with respect to the $L^{\infty}$ norm in the case of expanding maps having $C^2$ regularity. A similar problem was faced in \cite{BB} and outlined in \cite{L} using the Keller-Liverani spectral stability result (\cite{KL}). 
We write explictly only the
arguments that differ substantially from the theory developed above and sketch the arguments
that can be deduced from the former sections.

\subsection{Higher regularity: the general framework}\label{sec:higher1} 
In this section we consider expanding maps of $S^1$; remark that also expanding Markov maps of the interval can be trated in a similar way in this framework.
\begin{definition}
Let $\tau:S^1\to S^1$ be a measurable transformation, $\tau\in C^2(S^1,S^1)$; we say $\tau$ is an expanding map of the circle if $|\tau'(x)|\geq\lambda>1$ for every $x\in S^1$.
\end{definition}

Such maps have a Lipshitz invariant density (see \cite{L2}, e.g.). Let us see how to find it with our approach.

In this section we will denote by $||.||_{\infty}$ the supremum norm on the interval and by
$||.||_{\textrm{Lip}}:=||.||_{\infty}+\textrm{Lip}(.)$ where $\textrm{Lip}(.)$ is
the Lipschitz costant of an observable.  We also denote by $C^{\textrm{Lip}}(I)$ the set of Lipschitz function over the interval. Below,  we will denote the operator norm $|| \ ||_{L^\infty\to L^\infty }$ with $|| \ ||_{\infty }$.

 Since $\tau$ satisfies a Lasota-Yorke inequality of the form
$\textrm{var}(L^n g)\leq \lambda^n \textrm{var}(g) +B||g||_1,$
Lemma 3.1 and section 3.1 of \cite{BB} give us the following. 
\begin{remark}\label{rem:M}
The $L^\infty $ operator norm of $L^n$ can be bounded by
\[||L^n||_{\infty}\leq M:=B+1.\]
\end{remark}

\begin{remark}\label{rem:LYMarkov} For Markov maps of the interval, such a L-Y inequality is proved in \cite{L2},
with coefficients 
\[\lambda\leq 1/\inf|\tau '| \quad B\leq \frac{1}{1-\lambda} \cdot
\bigg|\bigg|\frac{T''}{{T'}^2}\bigg|\bigg|_{\infty}.\]
\end{remark}

Fix now $k\geq k_0$ such that $\alpha=M\lambda^k<1$. Let $T:=\tau^k$ and let $L$ be (by abuse of
notation) the transfer operator associated to $T$; Lemma 3.3 of \cite{BB} proves that:

\begin{lemma}\label{lemma:LY-Lip}
The transfer operator $L:C^{\textrm{Lip}}(I)\to C^{\textrm{Lip}}(I)$ satisfies the following
Lasota-Yorke inequality:
\[\textrm{Lip}(Lg)\leq \alpha \textrm{Lip}(g)+B_1 ||g||_{\infty},\]
where $B_1:=\textrm{Lip}(L 1)$ (the transfer operator applied to the characteristic function of the unit interval).
For every $n\geq 1$ we have
\[||L^n g||_{\textrm{Lip}}\leq \alpha^n ||g||_{\textrm{Lip}}+M
(1+\frac{B_1}{1-\alpha})||g||_{\infty}.\] 
\end{lemma}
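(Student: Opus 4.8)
The plan is to prove the two assertions in turn: first the one-step Lasota--Yorke inequality $\textrm{Lip}(Lg)\le\alpha\,\textrm{Lip}(g)+B_1||g||_\infty$, which is the substance of the lemma and coincides with the content of Lemma 3.3 of \cite{BB}, and then to bootstrap it into the iterated bound using the uniform $L^\infty$-control $||L^n||_\infty\le M$ recorded in the Remark above.

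For the one-step inequality I would express $L$ through the inverse branches of $T=\tau^k$. Since $|T'|\ge(\inf|\tau'|)^k=\lambda^{-k}$, each inverse branch $y_i$ of $T$ is well defined, $C^2$, and Lipschitz with constant $\sup_x|y_i'(x)|=\sup_x\phi_i(x)\le\lambda^k$, where $\phi_i(x):=1/|T'(y_i(x))|$ is the associated weight and $(Lg)(x)=\sum_i g(y_i(x))\,\phi_i(x)$. To estimate $(Lg)(x)-(Lg)(x')$ I would add and subtract $g(y_i(x))\phi_i(x')$, splitting the increment into a term carrying the oscillation of $g$ and a term carrying the oscillation of the weights:
\[
(Lg)(x)-(Lg)(x')=\sum_i\bigl(g(y_i(x))-g(y_i(x'))\bigr)\phi_i(x)+\sum_i g(y_i(x'))\bigl(\phi_i(x)-\phi_i(x')\bigr).
\]
The first sum is at most $\textrm{Lip}(g)\sum_i|y_i(x)-y_i(x')|\phi_i(x)\le\textrm{Lip}(g)\,\lambda^k\,(L\mathbf{1})(x)\,|x-x'|$, and since $(L\mathbf{1})(x)\le||L\mathbf{1}||_\infty\le||L||_\infty\le M$ by the earlier Remark, it contributes $\alpha\,\textrm{Lip}(g)\,|x-x'|$ with $\alpha=M\lambda^k$. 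The second sum is at most $||g||_\infty\sum_i|\phi_i(x)-\phi_i(x')|$, whose increment rate is controlled by the Lipschitz constant of the weight sum $L\mathbf{1}=\sum_i\phi_i$, giving $B_1||g||_\infty$ with $B_1=\textrm{Lip}(L\mathbf{1})$.

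I expect this second term to be the delicate point: identifying the combined oscillation of the individual weights $\phi_i$ with the single constant $\textrm{Lip}(L\mathbf{1})$ relies on the global branch structure of the circle map, under which the branches glue into the one smooth weight $L\mathbf{1}$, and keeping the constant as sharp as $\textrm{Lip}(L\mathbf{1})$ rather than a cruder branchwise sum is precisely what Lemma 3.3 of \cite{BB} supplies; at this step I would therefore invoke that result directly rather than re-deriving the sharp constant.

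Granting the one-step inequality, the iterated bound is routine. From $\textrm{Lip}(L^n g)\le\alpha\,\textrm{Lip}(L^{n-1}g)+B_1||L^{n-1}g||_\infty$ together with $||L^{n-1}g||_\infty\le M||g||_\infty$ (valid for every $n\ge1$ by the uniform bound, the case $n=1$ being covered since $M\ge1$), induction on $n$ gives
\[
\textrm{Lip}(L^n g)\le\alpha^n\,\textrm{Lip}(g)+MB_1||g||_\infty\sum_{j=0}^{n-1}\alpha^j\le\alpha^n\,\textrm{Lip}(g)+\frac{MB_1}{1-\alpha}||g||_\infty,
\]
where $\alpha<1$ sums the geometric series. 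Adding $||L^n g||_\infty\le M||g||_\infty$ and bounding $\textrm{Lip}(g)\le||g||_{\textrm{Lip}}$ then yields
\[
||L^n g||_{\textrm{Lip}}\le\alpha^n||g||_{\textrm{Lip}}+M\Bigl(1+\frac{B_1}{1-\alpha}\Bigr)||g||_\infty,
\]
which is the claimed estimate.
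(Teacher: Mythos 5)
Your proposal is correct, and it supplies strictly more than the paper does: the paper contains no proof of this lemma at all, stating only that ``Lemma 3.3 of \cite{BB} proves the following'', so the paper's entire argument is that citation. Your reconstruction of the one-step inequality through the inverse branches, splitting $(Lg)(x)-(Lg)(x')$ into a term carrying the oscillation of $g$ and a term carrying the oscillation of the weights $\phi_i=1/|T'(y_i(\cdot))|$, is the standard argument, and your treatment of the first term is complete and correct: $\sum_i|y_i(x)-y_i(x')|\phi_i(x)\leq \lambda^k (L\mathbf{1})(x)|x-x'|$ together with $||L\mathbf{1}||_{\infty}\leq M$ gives exactly $\alpha=M\lambda^k$. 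Your suspicion about the second term is also well founded, and it is the only genuine subtlety: the naive estimate produces the branchwise constant $\sum_i \textrm{Lip}(\phi_i)$, which dominates $\textrm{Lip}(L\mathbf{1})=\textrm{Lip}(\sum_i\phi_i)$ and can strictly exceed it because the $\phi_i'$ need not share a sign (for a perturbed doubling map $\tau(x)=2x+\epsilon\sin(2\pi x)$ the two branch weights oscillate in opposition), so your decomposition alone cannot yield the constant literally defined as $B_1=\textrm{Lip}(L\mathbf{1})$; deferring that sharp constant to \cite{BB} is therefore legitimate, and is no more of a black box than the paper's own treatment. Note, moreover, that you could dispense with \cite{BB} altogether by running your argument with the larger constant $B_1'=\sum_i\textrm{Lip}(\phi_i)$: the bound the paper states immediately after the lemma, $B_1\leq l\cdot||T''/(T')^2||_{\infty}$, is obtained precisely by estimating this branchwise sum, so every later use of $B_1$ survives with the weaker constant. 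Finally, your induction for the iterated bound (geometric series in $\alpha$, the uniform bound $||L^m||_{\infty}\leq M$ from the preceding Remark, and $M\geq 1$ to cover the base case $n=1$) is correct and complete; that step too is left implicit in the paper.
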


%\begin{remark}
%By the Hennion-N\"{u}ssbaum argument, we get that the essential spectral radius is bounded by
%$\alpha$. 
%\end{remark}

Suppose $\{I_i\}$ is a partition of $S^1$ such that $T|_{I_i}$ is invertible, denote by $T^{-1}_i$ the inverse. As a first remark, we give an estimate for $B_1$:
\begin{align*}
|L1(x)-L1(y)|&=|\sum_{i=1}^l \frac{1(T_i^{-1}(x))}{T'(T_i^{-1}(x))}-\sum_{i=1}^l
\frac{1(T_i^{-1}(y))}{T'(T_i^{-1}(y))}|\\
&\leq \sum_{i=1}^l |
\frac{1(T_i^{-1}(x))-1(T_i^{-1}(y))}{T'(T_i^{-1}(x))}|+\sum_{i=1}^l|\frac{1(T_i^{-1}(y))}{T'(T_i^{-1}(x))}-\frac
{ 1(T_i^{-1}(y))}{T'(T_i^{-1}(y))}|\\
&\leq l\cdot \bigg|\bigg|\frac{T''}{(T')^2}\bigg|\bigg|_{\infty} |x-y|.
\end{align*}
Therefore $B_1\leq l\cdot ||{T''}/{(T')^2}||_{\infty}$.
\begin{comment}
Another interesting remark is that, if $\phi$ is a $C^0$ observable, such that $|\phi(x)|$ attains
maximum in $\xi\in[0,1]$, we have that\footnote{levare?}
\[||\phi||_{\infty}=\int_0^1 |\phi(\xi)| dx\leq \int_0^1 |\phi(x)|+|\phi(x)-\phi(\xi)|dx\leq
||\phi||_1+\textrm{Var}(\phi)=||\phi||_{BV}.\]
\end{comment}

If $f\in C^{Lip}$ is the fixed point of $L$, from the variation L-Y inequality, we have 
\[||f||_{\infty}\leq||f||_{BV}\leq B+1.\] 

\subsection{Higher regularity: the approximation strategy}\label{sec:higher2}
We define now a discretization of the operator $L$, projecting on finite dimensional subspace of densities with higher
regularity with respect to the standard Ulam one.
This permits us to get an estimate in the $||.||_{\infty}$ norm for the approximation error.
\begin{theorem}
Let $P$ be a partition ${a_0,\ldots,a_n}$ of $S^1$ in $k$ homogeneous intervals; let $\{\phi_i\}$
be the family of functions given by
\[
  \phi_i(x) = \left\{
  \begin{array}{l l}
     k\cdot (x-a_{i-1}) & \quad x\in [a_{i-1},a_i]\\
     -k\cdot (x-a_{i+1}) & \quad x\in [a_i,a_{i+1}]\\
    0 & \quad x\in [a_{i-1},a_{i+1}]^c,\\
  \end{array} \right.
\]
where by definition $a_{-1}:=a_n$.
The finite dimensional "projection" \footnote{We warn that this is not a formally projection, in the sense that $\pi$ is not necessarily equal to $ \pi^2$. }
\[
\pi(f)(x)=\sum_j \frac{\int_{S^1} f\phi_j}{\int_{S^1} \phi_j}\cdot \phi_j(x),
\]
has the following properties
\begin{enumerate}
\item $\textrm{\textrm{Lip}}(\pi(f))\leq \textrm{\textrm{Lip}}(f);$
\item $||\pi(f)||_{\infty}\leq ||f||_{\infty}$
\item $||\pi(f)-f||_{\infty}\leq \textrm{\textrm{Lip}}(f)/k$
\end{enumerate}
\end{theorem}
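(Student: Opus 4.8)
The plan rests on three structural facts about the hat functions $\phi_i$. First, $\phi_j(a_i)=\delta_{ij}$, so the coefficient multiplying $\phi_j$ in $\Pi(f)$ equals the value of $\Pi(f)$ at the node $a_j$; hence $\Pi(f)$ is exactly the piecewise-linear interpolant of the numbers $c_j := \frac{\int f\phi_j}{\int\phi_j}$ at the nodes $a_j$. Second, the $\phi_j$ form a partition of unity, $\sum_j \phi_j \equiv 1$, so on each interval $[a_i,a_{i+1}]$ only $\phi_i,\phi_{i+1}$ are nonzero and $\Pi(f)(x)=\phi_i(x)c_i+\phi_{i+1}(x)c_{i+1}$ is a convex combination of $c_i$ and $c_{i+1}$. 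Third, $\int\phi_j=1/k$, so $c_j=k\int f\phi_j$ is a weighted average of $f$ against the probability density $k\phi_j$ supported on $[a_{j-1},a_{j+1}]$; in particular $\inf_{\mathrm{supp}\,\phi_j}f\le c_j\le\sup_{\mathrm{supp}\,\phi_j}f$.

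Properties (2) and (1) then follow quickly. For (2), each $c_j$ is a weighted average of $f$, so $|c_j|\le||f||_\infty$; since $\Pi(f)(x)$ is a convex combination of the $c_j$, also $|\Pi(f)(x)|\le||f||_\infty$. For (1), the interpolant $\Pi(f)$ is affine on each $[a_i,a_{i+1}]$ with slope $k(c_{i+1}-c_i)$, so $\textrm{Lip}(\Pi(f))=\max_i k|c_{i+1}-c_i|$. To bound $|c_{i+1}-c_i|$ I would use the centered representation $c_j=k\int_{-1/k}^{1/k}f(a_j+t)(1-k|t|)\,dt$ together with $a_{i+1}=a_i+1/k$, writing $c_{i+1}-c_i=k\int (f(a_i+\tfrac1k+t)-f(a_i+t))(1-k|t|)\,dt$; Lipschitz continuity bounds the integrand difference by $\textrm{Lip}(f)/k$, and $k\int(1-k|t|)\,dt=1$, giving $|c_{i+1}-c_i|\le\textrm{Lip}(f)/k$ and hence slope $k|c_{i+1}-c_i|\le\textrm{Lip}(f)$.

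The delicate point is (3), where the stated constant $1/k$ is tight enough that the crude triangle-inequality estimate (which only yields $2/k$) does not suffice. Using the partition of unity I would write, for $x\in[a_i,a_{i+1}]$,
\[
\Pi(f)(x)-f(x)=\phi_i(x)\,(c_i-f(x))+\phi_{i+1}(x)\,(c_{i+1}-f(x)),
\]
and estimate each term through $c_j-f(x)=k\int (f(y)-f(x))\phi_j(y)\,dy$, so that $|c_j-f(x)|\le\textrm{Lip}(f)\cdot k\int|y-x|\phi_j(y)\,dy=:\textrm{Lip}(f)\,d_j(x)$. The main obstacle is that for $x$ near the far endpoint of $[a_i,a_{i+1}]$ the weighted mean distance $d_i(x)$ grows up to $1/k$, which in isolation would only give $2/k$; the resolution is that exactly there the weight $\phi_i(x)$ is small, so the two contributions trade off against each other. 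I would make this explicit by evaluating the weighted mean distances directly (elementary polynomial integrals of the tent weight) and checking that the convex combination $\phi_i(x)\,d_i(x)+\phi_{i+1}(x)\,d_{i+1}(x)$ stays below $1/k$ for every $x$, which yields $||\Pi(f)-f||_\infty\le\textrm{Lip}(f)/k$. Throughout I would assume $f$ is Lipschitz, hence continuous, which is the only regularity the argument uses.
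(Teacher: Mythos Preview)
Your proposal is correct and follows essentially the same route as the paper's proof: both arguments use that the $\phi_j$ form a partition of unity, that the coefficients $c_j$ are weighted averages of $f$, and (for item 1) the translation identity $c_{i+1}-c_i = k\int\big(f(y+\tfrac1k)-f(y)\big)\phi_i(y)\,dy$.

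The one place where you are more explicit than the paper is item (3). The paper writes the same decomposition
\[
|\Pi(f)(x)-f(x)| \;\le\; \sum_i \phi_i(x)\cdot k\!\int |y-x|\,\phi_i(y)\,dy \cdot \mathrm{Lip}(f)
\]
and then asserts in one step that this is $\le \mathrm{Lip}(f)/k$, without further comment. You correctly notice that the crude bound $|y-x|\le 2/k$ on the relevant supports would only yield $2/k$, and that the sharper constant $1/k$ requires the trade-off between $\phi_i(x)$ and $d_i(x)$ that you describe. Your plan to compute $d_i(x)$, $d_{i+1}(x)$ explicitly and check $\phi_i(x)\,d_i(x)+\phi_{i+1}(x)\,d_{i+1}(x)\le 1/k$ is sound; carrying it out gives (with $t=k(x-a_i)\in[0,1]$ and $g(u)=\tfrac13+u^2-\tfrac{u^3}{3}$) the value $\tfrac1k\big[(1-t)g(t)+t\,g(1-t)\big]=\tfrac1k\big[\tfrac13+\tfrac23 v(1+v)\big]$ with $v=t(1-t)\le\tfrac14$, so the bracket is at most $\tfrac{13}{24}<1$. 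Thus your approach not only matches the paper's but fills in the step the paper leaves implicit.
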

\begin{proof}
Item $1$ is true since:
\begin{align*}
\textrm{\textrm{Lip}}(\pi(f))&=\frac{k}{|x_{j}-x_{i}|}\cdot\max_{i,j} |\int_{x_{j-1}}^{x_{j+1}}(f(x)-f(x+(x_j-x_i)))\phi_j(x)dx|\leq\textrm{\textrm{Lip}}(f).
\end{align*}
Item $2$ is true since
\[|\pi(f)(x)|=|\sum_i\frac{1}{\int_{S^1} \phi_i}\int_{S^1} f\phi_i dy \phi_i(x)|\leq||f||_{\infty}|\sum_i
\phi_i(x)|\leq ||f||_{\infty}. \]
Item $3$ is true since
\begin{align*}
|\pi(f)(x)-f(x)|&\leq \sum_i\frac{1}{\int_{S^1} \phi_i}\int_{S^1} \textrm{\textrm{Lip}}(f)|y-x| \phi_i(y)
dy\cdot |\phi_i(x)|\\
&\leq \textrm{\textrm{Lip}}(f)\cdot \frac{1}{k}.
\end{align*}
\end{proof}
\begin{comment}
&=\max_{i,j} \cdot \bigg|\frac{\int_{S^1} f\phi_j}{\int
\phi_j}-\frac{\int_{S^1} f\phi_{i}}{\int
\phi_{i}}\bigg|/(x_{j}-x_{i})\\

 &=|\sum_i\frac{1}{\int_{S^1} \phi_i}\bigg(\int_{S^1} f(y)\phi_i(y) dy -\int_{S^1}
f(x)\phi_i(y)
dy\bigg)\phi_i(x)|\\
\end{comment}

From Lemma \ref{lemma:LY-Lip} and the properties of $\pi$ we have the following 
\begin{theorem}\label{theo:opdist}
If $f$ is a fixed point of $L$, then
\begin{align*}
||(L-\pi L \pi)f||_{\textrm{\textrm{Lip}}\rightarrow C^0} &\leq \frac{2}{k}(1+M)\textrm{\textrm{Lip}}(f).
\end{align*}
\end{theorem}
\begin{proof}
\begin{align*}
||(L-\pi L \pi) f||_{\infty}&\leq ||f-\pi f||_{\infty}+||\pi( L -L \pi)
f||_{\infty}
\end{align*}
and, from the fact that $||L||_{\infty}<M$ we have the thesis.
\end{proof}

\begin{comment}
\begin{theorem}\label{theo:opdist}
\[||(L-L_k)g||_{\textrm{\textrm{Lip}}\rightarrow C^0}\leq
\frac{2}{k}((\lambda+M)\textrm{\textrm{Lip}}(g)+B_1
||g||_{\infty}). \]
\end{theorem}
\begin{proof}
\begin{align*}
||(L-\pi L \pi) g||_{\infty}&\leq ||(L-\pi L) g||_{\infty}+||(\pi L -\pi L \pi)
g||_{\infty}\\
%&\leq \frac{2}{k}\textrm{\textrm{Lip}}(Lg)+||(L -L \pi)g||_{\infty}\\
%&\leq \frac{2}{k}\textrm{\textrm{Lip}}(Lg)+M ||g-\pi g||_{\infty}\\
&\leq \frac{2}{k}\textrm{\textrm{Lip}}(Lg)+M \frac{2}{k} \textrm{\textrm{Lip}}(g)\\
&\leq \frac{2}{k}((\lambda+M)\textrm{\textrm{Lip}}(g)+B_1 ||g||_{\infty}).
\end{align*}
\end{proof}
\begin{remark}
If $f$ is the fixed point of the operator, last inequality transforms in 
\begin{align*}
||(L-\pi L \pi)f||_{\textrm{\textrm{Lip}}\rightarrow C^0} &\leq
\frac{2}{k}\textrm{\textrm{Lip}}(f)+M \frac{2}{k} \textrm{\textrm{Lip}}(f)\\
&\leq \frac{2}{k}(1+M)\textrm{\textrm{Lip}}(f).
\end{align*}
\end{remark}
\end{comment}
%\begin{remark}
Now we have all the ingredients to apply Theorem \ref{gen} and our algorithm, but for a
different norm. %The argument needed to prove that the algorithm works for this operator is really similar to the one in Section \ref{sec:AlgorithmWorks}, since, if $g$ belongs to the span of the $\{\phi_j\}$s we have that
%$\textrm{Lip}(g)\leq 2 ||g||_{\infty}\cdot k$.
%\end{remark}

Computing rigorously $L \phi_i$ can be an expensive task,  we can avoid to compute it directly. Instead of computing $L_k:=\pi L\pi$, we can compute a suitable approximation $\tilde{L}_k$. This operator
is obtained by projecting on the $\{\phi_j\}$ the functions
\[\tilde{L}\phi_i(x)=\frac{1}{T'(a_i)}\phi_i\bigg(a_i+\frac{1}{T'(a_i)}(y-T(a_i))\bigg),\]
i.e., studying the operator obtained by taking on each interval $[a_{i-1},a_{i+1}]$ the
linearization $\tilde{T}$ of the map $T$.
A simple computation shows that
\begin{align*}
&||L_k\phi_i-\tilde{L}_k\phi_i||_{\infty}=\\
&\leq ||\frac{\phi_i(T^{-1}(x))}{|T'(T^{-1}(x))|}
-\frac { \phi_i( T^{-1}(x))}{|T'(x_i)| } ||_ {
\infty}+||\frac{\phi_i(T^{-1}(x))}{|T'(x_i)|}
-\frac { \phi_i(\tilde { T }^{-1}(x))}{|T'(x_i)| } ||_ { \infty}\\
&\leq\frac{4}{k^2}\cdot \bigg|\bigg|\frac{T''}{(T')^2}\bigg|\bigg|_{\infty}.
\end{align*}

\begin{remark}\label{rem:approxsmooth}
Noting that
\[||Lf-\tilde{L}_k f||_{\infty}\leq ||Lf-L_k f||_{\infty}+||L_kf-\tilde{L}_k f||_{\infty},\]
if $\tilde{v}_k$ is the eigenvector we compute using the operator $\tilde{L}_k$, we can
now express the rigorous error using Theorem \ref{gen} and the fact that the $||L_k^i||_{\infty}<M$ for every $i$ (by Remark \ref{rem:M})
\begin{align*}
||f-\tilde{v}_k||_{\infty}&\leq \frac{2}{k}\cdot N\cdot M\cdot (||L-L_k ||_{\infty}+||L_k-\tilde{L}_k||_{\infty})||f||_{\infty}\\
&\leq \frac{2}{k}\cdot N\cdot
M\bigg(2(M+1)M(1+\frac{B_1}{1-\alpha})+\frac{4}{k}\bigg|\bigg|\frac{T''}{(T')^2}\bigg|\bigg|_{\infty}\bigg)\cdot (B+1)
\end{align*}
\end{remark}

\section{Maps with indifferent fixed points}
\label{mann}

In the literature the computation of the invariant measures for such type of
maps was already discussed from  different points of view (see e.g. \cite{BBD,GalHoyRoj3,Mr}).
In particular two approaches are proposed:

\begin{itemize}
\item reduction of the problem to a piecewise expanding induced system (\cite%
{BBD})

\item direct application of a discretization method (\cite{Mr})
\end{itemize}

No explicit implementations are provided. So it is not clear what method
could be really suitable for the purpose. We implement a direct
discretization, following the general strategy descibed in our paper.

We also compute the entropy of an example of such systems. In \cite{NLS}
it is shown that statistical estimators converge slowly for these systems,
further motivating the rigorous calculation fo the entropy for such systems.

Let $0 < \alpha < 1$ and let us consider a map $T:[0,1]\rightarrow
\lbrack 0,1]$ of the following type:

\begin{enumerate}
\item $T(0)=0$ and there is a point $d\in (0,1)$ s.t. $T:[0,d)\overset{onto}{%
\rightarrow }[0,1)$, $T:[d,1)\overset{onto}{\rightarrow }[0,1]$.

\item Each branch of $T$ is increasing, convex and can be extended to a $%
C^{1}$ function; $T^{\prime }>1$ for all $x\in (0,d)\cup (d,1)$ and $T^{\prime }(0)=1$.

\item There is a constant $C\in (0,\infty )$ such that%
\begin{equation}
T(x)\geq x+Cx^{1+\alpha }.  \label{MP}
\end{equation}
\end{enumerate}

This kind of maps are now well known to have an absolutely continuous
invariant measure $f$ which is decreasing and unbounded, moreover they have
slow (polynomial) decay of correlation.

To apply our strategy we need an estimation for the regularity of $f$ (see item I1 in Section \ref{ulammthd} ).  A
useful estimation can be found in \cite{Mr} (Proposition 1.1, Theorem 1,
Equation 3, see also \cite{LSV}), indeed

\begin{proposition}
Let us consider the transfer operator $L$ associated to $T$ and the
following cone of decreasing functions 
\begin{equation*}
C_{A}=\{g\in L^{1}|g\geq
0,g~decreasing,~\int_{0}^{1}f~dm=1,\int_{0}^{x}f~dm\leq Ax^{1-\alpha }\}.
\end{equation*}

Let $A_{\ast }=((1-\alpha )Cd^{2+\alpha })^{-1}$, if $A\geq A_{\ast },$then $%
L(C_{A})\subseteq C_{A}$. Moreover the unique invariant density $f$ of $T$
is in $C_{A_{\ast }}$.
\end{proposition}

We remark (\cite{Mr}, lemma 2.1) that the if $f\in $ $C_{A}$ then $f(x)\leq
Ax^{-\alpha }$.

\subsection{Application of our strategy: items a), b),c)}\label{sec:indifferent_strategy}

Let us show the a priori estimation which is needed to start our strategy:
item a).

Let $g\in C_{A}$. Let $\pi $ be the Ulam projection with $\delta $ size
intervals: $\pi (g)=\mathbf{E}(g|\mathcal{F}_{\delta })$ and let $x_{0}=\tilde{n} \delta \in I$, 
with $\tilde{n}$ a small integer, and $g=g_{<x_{0}}+g_{>x_{0}}$ where $%
g_{<x_{0}}=g~1_{[0,x_{0})}$ and $g_{>x_{0}}=g~1_{[x_{0},1]}$.

Now

\begin{itemize}
\item $||g_{>x_{0}}-\pi g_{>x_{0}}||_{1}\leq \delta ~var(g_{>x_{0}})\leq
\delta Ax_{0}^{-\alpha }$

\item $||g_{<x_{0}}-\pi g_{<x_{0}}||_{1}\leq ||g_{<x_{0}}||_{1}\leq
Ax_{0}^{1-\alpha }$,
\end{itemize}
hence 
\begin{equation*}
||g-\pi g||_{1}\leq \delta Ax_{0}^{-\alpha }+Ax_{0}^{1-\alpha }.
\end{equation*}

\bigskip

We can take $x_{0}=\delta $ and
%\footnote{%
%We can also optimize $\delta x_{0}^{-\alpha }+x_{0}^{1-\alpha }$ on $x_{0}$
%to make this as small as possible.
%\par
%$(\delta x_{0}^{-\alpha }+x_{0}^{1-\alpha })^{\prime }=x^{-\alpha }(-\delta
%\alpha x^{-1}+1-\alpha )$ thus the minimum is at $x_{0}=\frac{-\delta \alpha 
%}{\alpha -1}$.
%\par
%Hence the minimum is at $A\delta ^{1-\alpha }[(\frac{\alpha }{\alpha -1}%
%)^{-\alpha }+(\frac{\alpha }{\alpha -1})^{1-\alpha }].$} 
obtain 
\begin{equation*}
||g-\pi g||_{1}\leq 2A\delta ^{1-\alpha }.
\end{equation*}

Now let $f\in C_{A_{\ast }}$ be the invariant density. Remark that since $L$
and $\pi $ are $L^{1}$ contractions, for what is said above, $||Lf-L\pi
f||_{1}\leq ||f-\pi f||_{1}\leq 2A_{\ast }\delta ^{1-\alpha }$. Now,

\begin{eqnarray*}
||f-\pi L\pi f||_{1} &\leq &||f-\pi Lf+\pi Lf-\pi L\pi f||_{1} \\
&\leq &||f-\pi f||_{1}+||Lf-L\pi f||_{1} \\
&\leq &4A_{\ast }\delta ^{1-\alpha }.
\end{eqnarray*}

%Now remark that since each function $g\in C_{A}$ is decreasing, $%
%\int_{0}^{x}g~dm\geq \int_{0}^{x}\pi g,$ hence $g\in C_{A}$ implies $\pi
%g\in C_{A}$ (or if you take decreasing strictly there is an element in $C_{A}
%$ as near as you want).
%
%By this $||L\pi f-\pi L\pi f||_{1}\leq 2A_{\ast }\delta ^{1-\alpha }$ too,
%and then 
%\begin{equation*}
%||f-\pi L\pi f||_{1}\leq 4A_{\ast }\delta ^{1-\alpha }.
%\end{equation*}%
%\bigskip 

\bigskip

This gives the estimation needed at Item a) of Theorem \ref{gen}.

About item b) and c), since we are approximating in $L^{1}$, the discussion
is the same of the one shown in Sections \ref{secitem3} and \ref{secitem4},
thus $C_{i}\leq 1$.

\section{Implementing the algorithm}

\label{sec:ImplAlg}

In this section we explain the details in the implementation of our
algorithm  and some related numerical issue.
The main points are the computation of a rigorous approximation of the related Markov
chain \ and a fast method to approximate rigorously its steady state. We include some
implementation and numerical supplementary remarks, which can be skipped at
a first reading.
\begin{comment}
The program is written in C++ and the source of the program can be found at
\begin{center}
\textbf{http://poisson.phc.unipi.it/$\sim$ nisoli/invmeasure/code-1.0-double/};
\end{center}
this version of the code is quite straightforward, to ease the understanding of the
implementation. Later, optimized versions are going to be published on the same page.
\end{comment}

\subsection{Computing the Ulam approximation}\label{sec:UlamApprox}

To compute the matrix of the Ulam approximation, we have developed an
algorithm that computes, with a rigorous algorithm, the entries of a matrix $\tilde{P}_k$ which approximates $P_k$.
Now let us see how our algorithm computes a matrix $\tilde{P}_k  '$ which is preliminary to obtain  $\tilde{P}_k$.
Our algorithm computes each entry and the error associated to
each entry, in a way that the maximum of all these errors is is bounded by a certain quantity $\varepsilon$.
To compute the entries $P'_{ij}$ of the matrix consider each interval $I_i$ of the partition and consider two main cases: 
if $T$ is monotone on $I_i$, we can follow Algorithm \ref{alg:monotone}; 
if $T$ has a discontinuity in $I_i$ we use Algorithm \ref{alg:discontinuity}. 
In the algorithms  $\nu$ is an input costant, which is used to control the error on the coefficients.

\begin{algorithm}
\caption{Computing $\tilde{P}'_{ij}$ if $T$ is monotone on $I_i$}
\label{alg:monotone}
\begin{algorithmic}
\State Set $\tilde{P}'_{ij}=0$
\State Partition $I_i$ in $m$ intervals $I_{i,k}$ for $k=0,\ldots, m-1$
\For{$k = 0 \to m$} 
\State Compute $T(I_{i,k})$
\If {$T(I_{i,k})\subset I_j$} add $m(I_{i,k})$ to the coefficient $\tilde{P}'_{ij}$ \EndIf
\If {$T(I_{i,k})\subset (I_j)^{C}$} discard $I_{i,k}$ \EndIf
\If {$T(I_{i,k})\cap I_j\neq \emptyset$ and $T(I_{i,k})\cap (I_j)^{C}\neq \emptyset$ and $m(I_{i,k})>\nu$} divide $I_{i,k}$ in $m$ intervals and iterate the procedure \EndIf
\If {$T(I_{i,k})\cap I_j\neq \emptyset$ and $T(I_{i,k})\cap (I_j)^{C}\neq \emptyset$ and $m(I_{i,k})<\nu$} add $m(I_{i,k})$ to $\varepsilon_{ij}$, the error on the coefficient $\tilde{P}'_{ij}$ and discard $I_{i,k}$\EndIf
\EndFor
\end{algorithmic}
\end{algorithm}

\begin{algorithm}
\caption{Computing $\tilde{P}'_{ij}$ if $T$ has a discontinuity in $I_i$}
\label{alg:discontinuity}
\begin{algorithmic}
\State Set $\tilde{P}'_{ij}=0$
\State Partition $I_i$ in $m$ intervals $I_{i,k}$ for $k=1,\ldots, m$
\For{$k = 0 \to m$} 
\If {$I_{i,k}$ does not contain a discontinuity} apply Algorithm \ref{alg:monotone} to $I_{i,k}$ \EndIf
\If {$I_{i,k}$ contains the discontinuity and $m(I_{i,k})>\nu$} divide $I_{i,k}$ in $m$ intervals and iterate the procedure \EndIf
\If {$I_{i,k}$ contains the discontinuity and $m(I_{i,k})<\nu$} add $m(I_{i,k})$ to $\varepsilon_{ij}$, the error on the coefficient $\tilde{P}'_{ij}$ \EndIf
\EndFor
\end{algorithmic}
\end{algorithm}

The maximum of all the $\varepsilon_{ij}$ is really important for all our estimates: we are going to
denote it by $\varepsilon$.

We denote the matrix containing the computed coefficients
by $\tilde{P}_k'$, to distinguish it from $P_{k}$, the actual matrix of the Ulam
discretization. Please remark that $\tilde{P}'_k$ is not a stochastic matrix as we will need in the following. 
We perturb its entries to modify it and obtain a stochastic one by computing the sum of the elements for each row, subtract this number to $1$ and spread the result uniformly on each of the nonzero elements of
the row obtaining a new ``markovized'' matrix $\tilde{P}_k$.

Let $\varepsilon$ be the maximum of the errors $|\tilde{P}'_{ij}-P_{ij}|$, and let $\textrm{nnz}_i$  be the number of nonzero elements of the row. We  have that for each row $i$ the sum of its entries  differs from $1$ by at most $\textrm{nnz}_i\cdot\varepsilon$.
So, if we spread the result uniformly on each of the nonzero elements of the row we have a new matrix $\tilde{P}_k$ such that
\[ |\tilde{P}_{ij}-P_{ij}|< 2\cdot\varepsilon. \]
Let $\textrm{NNZ}=\max_i \textrm{nnz}_i$, then, the matrix
$\tilde{P}_k$ is such that 
\begin{equation*}
||P_k-\tilde{P}_k||_1<2\cdot \textrm{NNZ}\cdot {\varepsilon}.
\end{equation*}

The matrix $\tilde{P}_k$ is the matrix we are going to work with and the ``markovization'' process ensures
that the biggest eigenvalue of $\tilde{P}_k$ is $1$. 
Please note that, thanks to Theorem \ref{gen} we have a rigorous estimate of the $L^1$-distance between 
the eigenvectors of $\tilde{P}_k$ and $P_k$, as we are going to explain below.

\begin{remark}
Due to the form of \eqref{pij} we can bound the maximum number of non-zeros per row $\textrm{NNZ}\leq \sup |T'|+4$. 
\end{remark}

\subsection{Computing the $L^\infty$ approximation}\label{sec:HigherApprox}
As explained in section \ref{sec:higher2} we compute an approximation $\tilde{Q}_k$ to the matrix $Q_k$ associated to the 
operator $\tilde{L}_k$ linearizing the dynamics in correspondence of the nodes $a_0,\ldots,a_n$ of the discretization.
This permits us to express $\tilde{L}_k\phi_i$ in closed form and compute explicit 
formulas for the coefficients (finding the primitives). Using the iRRAM library (\cite{Mu}) 
we computed these coefficients so that all the digits represented in the \texttt{double} type are rigorously checked. 
Therefore, the error in the computation of the matrix $\tilde{Q}_k$ in the higher regularity case is due to the 
truncation involved in the markovization process:
\[||\tilde{Q}_k-Q_k||_{\infty}<2^{-50}=\varepsilon.\] 

\subsection{Computing rigorously the steady state vector and the error}\label{subsec:numerr}

\begin{remark}
Our algorithm and  our software work for maps which are topologically transitive.
This implies transitivity in the Markov chain approximating them.
Indeed, let $\mathring{I}_i$ and $\mathring{I}_j$ be the interior of two intervals of the partition, since the map is topologically transitive and the derivative is bounded away from zero,
there exists an $N_{ij}$ such that $T^{N_{ij}}(\mathring{I}_i)\cap \mathring{I}_j\neq\emptyset$, 
and this intersection is a union of
intervals, with nonzero measure. Therefore, if we call $\tilde{N}$ the maximum of all these $N_{ij}$
the matrix $P^{\tilde{N}}_k$ has strictly positive entries and therefore the matrix $P_k$ represents an
irreducible Markov chain. By the Perron-Frobenius theorem this implies that the steady state of the
Markov chain is unique.
\end{remark}

We want to find the steady state of
the irreducible Markov matrix $\tilde{P}_k$, to do so we use the power iteration method; given any
initial condition $b_{0}$, if we set 
\begin{equation*}
b_{l+1}=b_{l}\cdot \tilde{P}_k ,
\end{equation*}%
we have that $b_{l}$ converges to the steady state; we want to bound the numerical error of this operation from above.

In the following section we will denote by $||.||_F$ either the $1$-norm or the $\infty$-norm, 
depending in which framework are we working (the $F$ stands for finite dimensional).

We build an enclosure for the eigenvector using an idea from the proof of the Perron-Frobenius theorem
\cite[Theorem 1.1]{B}: a Markov matrix $A$ (aperiodic, irreducible) contracts the simplex $\Lambda$ of 
vectors $v$ having $1$-norm $1$.

This simplex is given by the convex combinations of the vectors $e_1,\ldots, e_k$ of the base; 
therefore, if we denote by $\textrm{Diam}_F$ the diameter in the distance induced by the norm $F$ we have
\begin{align*}
\textrm{Diam}_F(A^l \Lambda)&\leq \max_{i,j}||A^l(e_i-e_j)||_F\leq \max_{i,j}||A^l(e_1-e_j)||_F+||A^l(e_1-e_i)||_F\\
&\leq 2\max_i ||A^l(e_1-e_i)||_F.
\end{align*}

Fixed an input threshold $\varepsilon_{num}$ we iterate the vectors $\{e_1-e_j\}$, with $j=2,\ldots,n$ and look at their $F$-norm until we find an $l$ such that $\textrm{Diam}_F(A^l \Lambda)<\varepsilon_{num}$.
Therefore, for any initial condition $b_0$, iterating it $l$
times we get a vector contained in $A^l(\Lambda)$, whose numerical error is enclosed by $\varepsilon_{num}$.

\begin{Nremark}
We refer to \cite{Hi} for the following inequality about roundoff error in matrix vector multiplication, 
that we used to compute rigorously $N$ and $N_{\varepsilon}$ (as usual, $k$ is the size of the partition):
\[||\textrm{float}(Av)-Av||_F\leq \gamma_k \cdot ||A||_F ||v||_F\]
where, if $u$ is the machine precision
\[\gamma_k= \frac{ku}{1-ku}.\]
Please remark that $||A||_1=1,||v||_1\leq2$ in the Ulam case and that, since our matrix is sparse, we can substitute $k$ by $\textrm{NNZ}$ in the computation of the above constant.
\end{Nremark}

\subsection{Estimation of the rigorous error for the invariant measure}\label{sec:estimaterig}

The main issue that remains to be solved is the computation of the number of
iterations $N$ needed for the Ulam approximation $L_{\delta}$ to contract to 
$1/2$ the space of average $0$ vectors as explained in Section \ref{secitem3}.

In some way, we already assessed this question while we were computing the iterations of the
simplex; the vectors $e_1-e_j$, with $j=1,\ldots,k$ are a base for the space of average $0$
vectors, so, while computing rigorously the eigenvector, we can compute also the number of
iterations needed to contract the simplex. We have to be careful since we do not know the matrix $P_{k}$ of the
Ulam approximation $L_{\delta}$ explicitly but we know only its approximation $\tilde{P}_k$. 
\begin{comment}
To compute the
number of iterations needed by $L_{\delta }$ to contract the space of
average $0$ vectors to $1/2$, we need to use a chain of inequalities to give
an upper bound to the norm of $L_{\delta }^{j}|_{V}$, for $j$ an integer, using again Theorem \ref{gen}.
\end{comment}

Indeed (see Section \ref{secitem3}) 
\begin{equation*}
||L_{\delta }^{j}|_{V}||_1\leq ||(P_{k}^{j}-\tilde{P}_k ^{j}+\tilde{P}_k ^{j})|_{V}||_{1}\leq
||(P_{k}^{j}-\tilde{P}_k ^{j})|_{V}||_{1}+||\tilde{P}_k ^{j}|_{V}||_{1}.
\end{equation*}

We can estimate the second summand as follow
\begin{align*}
||P_{k}^j-\tilde{P}_k^j|_V||_1 &\leq \sum_{i=1}^j||P_{k}^{j-i}|_V||_1\cdot ||P_{k}-\tilde{P}_k|_V||_1\cdot
||\tilde{P}_k^{i-1}|_V||_1 \\
&\leq 2\cdot j\cdot \textrm{NNZ}\cdot\varepsilon,
\end{align*}
since $||P_{k}-\tilde{P}_k|_V||_1<2\cdot \textrm{NNZ}\cdot\varepsilon$, $||P_{k}^j|_V||_1\leq 1$
and $||\tilde{P}_k^h|_V||_1\leq 1$ for every $j,h$.
Therefore 
\begin{equation*}
||P^j_{k}|_V||_1\leq 2\cdot j\cdot \textrm{NNZ}\cdot\varepsilon+||\tilde{P}_k^j|_V ||_1.
\end{equation*}

Following the same line of thought we have, in the higher regularity case, that
\begin{equation*}
||Q^j_{k}|_V||_{\infty}\leq 2\cdot j\cdot M^2(\varepsilon+\frac{4}{k^2}\cdot \bigg|\bigg|\frac{T''}{(T')^2}\bigg|\bigg|_{\infty})+||\tilde{Q}_k^j|_V ||_{\infty}.
\end{equation*}

These two inequalities are really important for us, since they tell us that if $\varepsilon$ and $j$ are small enough we can estimate the number $N$ of iterates
needed for $P_k$ (resp. $Q_k$) to contract the space $V$ by the number of iterates needed by the matrix $\tilde{P}_k$.

\begin{Nremark} 
If $\varepsilon$ and $k$ are big, after some iterates the approximation error could hide the contraction of $\tilde{P}_k$.
Therefore, it is important to compute $\tilde{P}_k$ with a small $\varepsilon$.
\end{Nremark}

In the following we denote by $f$ the fixed point of $L$, $v_k$ the fixed point of $P_k$ (resp. $Q_k$), $v_{\varepsilon}$ the fixed point of $\tilde{P}_k$ (resp. $\tilde{Q}_k$) and by $\tilde{v}$ the numerical approximation of $v_{\varepsilon}$.
We recall now the sources of error in our computation, to make clear the last step of our algorithm: 

\begin{enumerate}
\item \label{err:dis} $||f-v_k||_F$, the discretization error, coming from the (Ulam or higher regularity) discretization of the transfer operator, whose final form was estimated in Remarks \ref{12} and \ref{rem:approxsmooth};

\item \label{err:app} $||v_k-v_{\varepsilon}||_F$, the approximation error: since we cannot compute
exactly the matrix $P_{k}$, we have to approximate it by computing a matrix $%
\tilde{P}_k$;

\item \label{err:num} $||v_{\varepsilon}-\tilde{v}||_F$, the numerical error in the computation of the
eigenvector, which was estimated in Subsection \ref{subsec:numerr},
\end{enumerate}
then
\[||f-\tilde{v}||_F\leq ||f-v_k||_F+||v_k-v_{\varepsilon}||_F+||v_{\varepsilon}-\tilde{v}||_F.\]

The last thing we need to compute to get our rigorous estimate is a bound
for the approximation error, item \ref{err:app}. We computed the number of iterates $N_{\varepsilon }$%
\footnote{%
please note that, if $\varepsilon$ is small,  $N_{\varepsilon}=N$ is expected. In the program we compute the two values indipendently,
even if in general $N_{\varepsilon}\leq N$.} needed for $\tilde{P}_k $ to contract
to $1/2$ the space of average $0$ vectors; then by using Theorem \ref{gen} we have that
\begin{equation*}
||v_{k}-v_{\varepsilon }||_{1}\leq 2N_{\varepsilon }||P_{k}-\tilde{P}_k
||_{1}||v_{k}||_{1}\leq 4N_{\varepsilon }\cdot \textrm{NNZ}\cdot \varepsilon.
\end{equation*}

In the $L^\infty$ case, the same reasoning leads to
\begin{equation*}
||v_{k}-v_{\varepsilon }||_{\infty}\leq 2N||Q_{k}-\tilde{Q}_k
||_{\infty}||v_{\varepsilon}||_{\infty}\leq 2N \cdot \varepsilon \cdot||v_{\varepsilon}||_{\infty}.
\end{equation*}
\begin{remark}
In this inequality we used $N$ instead of $N_{\epsilon}$. This is not a misprint but it is due to the fact that we have no a priori estimate of $||v_k||_{\infty}$, since we are using the piecewise linear approximation. 
To solve this issue we use Theorem \ref{gen} with $Q_k$ as $L_\delta$ 
and $\tilde{Q}_k$ as $L$ respectively.
\end{remark}

Finally, we have that, if $f$ is the invariant measure and $\tilde{v}$ is the computed vector,
using the estimate in Remark \ref{12}, the rigorous error is
\begin{equation*}
||f-\tilde{v}||_{1}\leq 2N\frac{2 B}{k}+4N_{\varepsilon }\cdot \textrm{NNZ}\cdot
\varepsilon +\varepsilon_{num}.
\end{equation*}
In the $\infty$ case, summing up all the inequalities, we get an explicit formula for the error
\begin{align*}
||f-\tilde{v}||_{\infty}\leq&\frac{2}{k}\cdot N\cdot
M\bigg(\frac{4}{k}\bigg|\bigg|\frac{T''}{(T')^2}\bigg|\bigg|_{\infty}+2(M+1)M(1+\frac{B_1}{1-\alpha})\bigg)\cdot (B+1)\\&+2N\cdot M^2(\varepsilon+\frac{4}{k^2}\bigg|\bigg|\frac{T''}{(T')^2}\bigg|\bigg|_{\infty}) (||\tilde{v}||_{\infty}+\varepsilon_{num}) +\varepsilon_{num},
\end{align*}
where $N$ is computed with respect to $||.||_{\infty}$.

\section{Rigorous computation of the Lyapunov exponent and entropy}\label{sec:Lyapunov}

The rigorous computation of the invariant density allows a rigorous estimation of the Lyapunov exponent of the system. 
These estimation can be used as a benchmark for the validation of statistical methods to compute entropy from time series.
We remark that for the experimental validation of these methods to understand how  fast they converge to the real value of the entropy an exact estimate for the value is needed. 
We give a method which can produce such estimation on interesting systems, where, an exact estimation of the entropy is not possible. This can be also  applied to systems having not a Markov structure, where the
convergence of statistical, symbolic methods may be slow (see \cite{NLS} e.g.). 
We remark that our approach gives statements on the entropy, wich can be considered as real mathematical theorems with a computer aided proof. 

The Lyapunov exponent at a point $x$, denoted by $\lambda(x)$, of a one dimensional map is defined by
\[L_{exp}(x)=\lim_{n\to+\infty}\frac{1}{n}\sum_{i=0}^{n}\log((T^i)'(x));\]
by Birkhoff ergodic theorem, we have that, relative to an ergodic invariant measure $\mu$, for $\mu$-a.e. $x$ we have that
\[L_{exp}(x)=\int_0^1 \log(|T'|)d\mu=L_{exp}.\]

Our algorithm permits us to compute the density of an invariant measure with a rigorous error bound. Suppose $\tilde{v}$ is the computed approximation for the invariant density, considered as a piecevise constant function; by Young's inequality we have that
\[\bigg|\int_0^1 \log(|T'(x)|) f(x) dx-\int_0^1 \log(|T'(x)|) \tilde{v}(x)dx\bigg|\leq
\max_{x\in[0,1]}(\log|T'(x)|)||f-\tilde{v}||_{1}.\]

Therefore, to compute the Lyapunov exponent, the only thing we have to do is to compute with a (relatively) small numerical error
the integral
\[\int_0^1 \log(|T'(x)|) \tilde{v} dx.\]

\begin{comment}
Since $f'(x)>1$ for all $x$, we have that, in each of the continuity intervals of $T$,
$\log(|T'(x)|)$ is continuous and at least $C^1$.
To compute the integral we used the rectangle rule, interpolating the function by a piecewise
constant function on the homogeneous partition on which we computed the Ulam approximation; in the
intervals in which the function has a discontinuity point we have a bigger error, of the order of
the size of the interval times the variation of the function.
Since the partitions we used in our examples are really small, the numerical error coming from the
method and the error arising from the discontinuities are really small compared to the rigorous
error.
\end{comment}

%\begin{Nremark}
%To ensure that the numerical error is much smaller than the rigorous errour (so that, rounding up the rigorous error,
%we can simply ignore it), we used the library iRRAM (\cite{Mu}) to evaluate the functions which we
%integrate. 
%This library allow  rigorous computation of the output of a function up to a prescribed error.
%\end{Nremark}

\section{Numerical experiments ($L^1$ case)} \label{sec:NumExp}

In this section we show the output of some complete experiments we made, using
the programs described above.

The code is now in an hybrid state: the routines that generate the matrix are written using
the BOOST Ublas library and can run on almost any computer, while the enclosure method for the certified computation of the
eigenvector requires a number of matrix-vector products proportional to the size of the partition: in our examples the size of the partition is $2^{20}\approx 10^6$. This forced us to implement and run our programs
in a parallel HPC enviroment, using the library PETSc and running them on the CINECA Cluster SP6.

The code for the programs, the matrices and the outputs of the cluster are found in the directory
\begin{center}
\textbf{http://poisson.phc.unipi.it/$\sim$nisoli/invmeasure/}
\end{center}

In every component where the maps are continuous, the maps are polynomials. So, we can use exact arithmetics (rationals) to
compute the matrix $\tilde{P}_k $.
Please note that the discontinuity points are irrational; this is taken care as we explained in
Section \ref{sec:UlamApprox}.

%The experiments are made on a homogeneous partition of $[0,1]$ in $1048576$
%intervals.
\begin{comment}
The total running time for the Lanford code was $6.263e+04$ seconds, the total running
time for the experiment for map \eqref{eq:quad} was $6.229e+04$ seconds, for experiment \eqref{eq:nonmarkov-2} was $6.1086e+04$ and for map \eqref{eq:nonmarkov} was $6.1229e+04$ seconds.

The bottleneck of the computations is the rigorous computation of the eigenvector (we underline the fact that it is
rigorous, since the computation of the eigenvector itself is a matter of few seconds), which is really
time consuming.

 this is due to the fact that we are taking the powers of a sparse matrix.
The main issue with such an operation is that the powers of a sparse matrix could be, in general,
non sparse. This means that, after some relatively fast iterations, the vector that we are
multiplying is dense.\footnote{scorciare qui}
\end{comment}

To ease  the reading of the tables of the data, here is a rapid summary of the different quantities
involved with reference to where they appear in the paper.

\begin{center}
\[
\begin{array}{llll}
\textrm{Inputs} & &\textrm{Outputs} &\\
\lambda & \textrm{L-Y inequality Remark \ref{rem:Bprime}}  &  N_{\varepsilon} & \textrm{iterates
of $\tilde{P}_k|_V$}\\
B' & \textrm{L-Y inequality Remark \ref{rem:Bprime}} & N & \textrm{iterates
of $P_{k}|_V$}\\
B  & \textrm{Bound for $||f||_{BV}$  Section \ref{ulammthd}}   & l & \textrm{iterates for the enclosure} \\
\varepsilon & \textrm{error on the matrix Section \ref{sec:UlamApprox}} & \varepsilon_{\textrm{rig}}
& \textrm{computed rigorous error} \\
\varepsilon_{num} & \textrm{numerical error Section \ref{subsec:numerr}} & L_{exp} &
\textrm{computed Lyapunov exponent}  \\

\end{array}
\]
\end{center}

\subsection{The Lanford map}

For our first numerical experiment we chose one of the maps which were
investigated in \cite{Lan}. The map $T:[0,1]\rightarrow \lbrack 0,1]$
given by 
\begin{equation*}
T:x\mapsto 2x+\frac{1}{2}x(1-x)\quad (\text{mod }1).
\end{equation*}%
What seems to be a good approximation of the invariant measure of the map is
plotted in figure $1$ of the cited article. Since this map does not comply
with the hypothesis of our article, i.e. there are some points where $%
1<|D_{x}T|\leq 2$ we study the map $T^{2}:=T\circ T$. Clearly, the invariant
measures for $T$ and $T^{2}$ coincide.

In figure \ref{fig:lanford} you can see a plot of this map and in figure \ref{fig:lanfordinvariant} you can see the plot of density of the the invariant measure we obtain through our method.

\begin{figure}[!h]
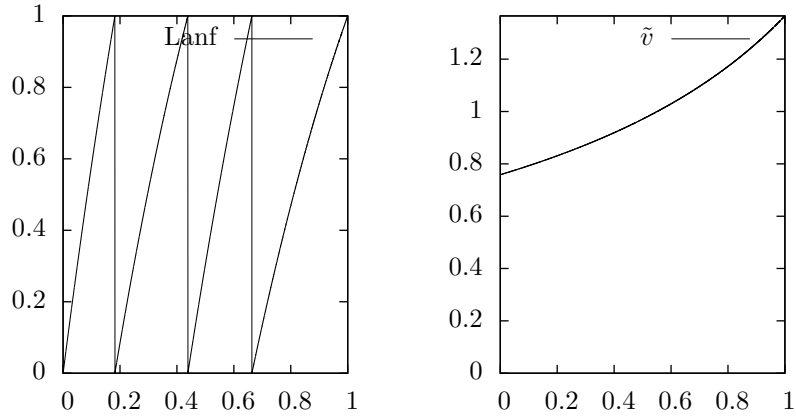

  \begin{subfigure}[b]{0.45\textwidth}
  \input{./lanfdynamic.tex}
  \caption{The second iterate of the Lanford map}
  \label{fig:lanford}

  \end{subfigure}  
  \begin{subfigure}[b]{0.45\textwidth}
  \input{./lanfdensity.tex}
 \caption{The invariant measure for the Lanford map}
    \label{fig:lanfordinvariant}
\end{subfigure}
\caption{Lanford's example.}
\end{figure}

Below, the data (input and outputs) of our algorithm.
\begin{center}
\[
\begin{array}{llll}
\textrm{Inputs} & &\textrm{Outputs} &\\
\lambda & 4/3\sqrt{17}  &  N_{\varepsilon} & 17\\
B' & \leq 7.019 & N & 18\\
B  & 19.88   & l & 25 \\
\varepsilon & \leq 3\cdot 10^{-11} & \varepsilon_{\textrm{rig}}
& 0.0016 \\
\varepsilon_{num} & \leq 0.0001 & L_{exp} & 1.315\pm 0.003  \\
\end{array}
\]
\end{center}
\begin{comment}
\subsection{A piecewise expanding map}

In this paragraph we study a sort of Lorenz map with bounded derivatives, $T:[0,1]\rightarrow [0,1]$ \ given by 
\begin{equation}\label{eq:quad}
T(x)=\left\{ 
\begin{array}{cc}
\frac{1}{8}+3x+2x^{2} & 0\leq x\leq \frac{1}{4} \\ 
4(x-\frac{1}{4}) & \frac{1}{4}<x\leq \frac{1}{2} \\ 
4(x-\frac{1}{2}) & \frac{1}{2}<x\leq \frac{3}{4} \\ 
\frac{7}{8}+3(x-1)-2(x-1)^{2} & \frac{3}{4}<x\leq 1.%
\end{array}%
\right.
\end{equation}%
whose graph is plotted in figure \ref{fig:quad}. The density of the invariant measure we obtain through our method is plotted in figure \ref{fig:quadinvariant}.
\begin{figure}[!h]
  \begin{subfigure}[b]{0.45\textwidth}
      \input{./quaddynamic.tex}
      \caption{Map \eqref{eq:quad}}
 \label{fig:quad}

  \end{subfigure}  
  \begin{subfigure}[b]{0.45\textwidth}
  \input{./quaddensity.tex}
  \caption{The invariant measure for map \eqref{eq:quad}}
  \label{fig:quadinvariant}
  \end{subfigure}
\caption{Example \eqref{eq:quad}.}
\end{figure}

Below, some of the data (input and outputs) of our algorithm.
\begin{center}
\[
\begin{array}{llll}
\textrm{Inputs} & &\textrm{Outputs} &\\
\lambda & 1/3  &  N_{\varepsilon} & 13\\
B' & \leq 10.67 & N & 14\\
B  & 32.03   & l & 22 \\
\varepsilon & \leq 10^{-12} & \varepsilon_{\textrm{rig}}
& 0.0019 \\
\varepsilon_{num} & \leq 0.0001 & L_{exp} & 1.327\pm 0.003  \\
\end{array}
\]
\end{center}
\end{comment}

\subsection{A map without the Markov property}

The map $T:[0,1]\rightarrow \lbrack 0,1]$ \ given by 
\begin{equation}\label{eq:nonmarkov-2}
T(x)=\frac{17}{5}x\textrm{ mod $1$}
\end{equation}%
whose graph is plotted in figure \ref{fig:nonmarkov-2}. This map does not enjoy the Markov property:
since $(17/5)^k$ is never an integer the orbit of $1$ is dense.

The density of the invariant measure we obtain through our method is plotted in figure
\ref{fig:nonmarkov-2-invariant}.

\begin{figure}[!h]
  \begin{subfigure}[b]{0.45\textwidth}
      \input{./nonmarkovlinearfunc.tex}
      \caption{Map \eqref{eq:nonmarkov-2}}
      \label{fig:nonmarkov-2}
  \end{subfigure}  
  \begin{subfigure}[b]{0.45\textwidth}
    \input{./nonmarkovlineardens.tex}
    \caption{The invariant measure for map \eqref{eq:nonmarkov-2}}
    \label{fig:nonmarkov-2-invariant}
  \end{subfigure}
\caption{Example \eqref{eq:nonmarkov-2}.}
\end{figure}

Below, some of the data (input and outputs) of our algorithm; please note that in this case we know
the exact value for the Lyapunov exponent.
\begin{center}
\[
\begin{array}{llll}
\textrm{Inputs} & &\textrm{Outputs} &\\
\lambda & 5/17  &  N_{\varepsilon} & 13\\
B' & < 17 & N & 14\\
B  & 41.47   & l & 20 \\
\varepsilon & \leq 1.75\cdot 10^{-10} & \varepsilon_{\textrm{rig}}
& 0.0026 \\
\varepsilon_{num} & \leq 0.0001 & L_{exp} & \ln(17)-\ln(5)  \\
\end{array}
\]
\end{center}
 
\subsection{A nonlinear version}

We study the map $T:[0,1]\rightarrow \lbrack 0,1]$ \ given by 
\begin{equation}\label{eq:nonmarkov}
T(x)=\left\{ 
\begin{array}{lc}
\frac{17}{5}x & 0\leq x\leq \frac{5}{17} \\ 
\frac{34}{25}(x-\frac{5}{17})^2+3(x-\frac{5}{17}) & \frac{5}{17}<x\leq \frac{10}{17} \\ 
\frac{34}{25}(x-\frac{10}{17})^2+3(x-\frac{10}{17}) & \frac{10}{17}<x\leq \frac{15}{17} \\ 
\frac{17}{5}(x-\frac{15}{17}) & \frac{15}{17}<x\leq 1.%
\end{array}%
\right.
\end{equation}%
whose graph is plotted in figure \ref{fig:nonmarkov}.
This map is really similar to map \eqref{eq:nonmarkov-2}, but it is nonlinear in the two intervals
$[5/17,10/17]$ and $[10/17,15/17]$, where it is defined by two branches of a polynomial of degree
two.

The density of the invariant measure we obtain through our method is plotted in figure \ref{fig:nonmarkovinvariant}.
Please note that, near $0.337$ and $0.403$ there are two small ``staircase steps'' which are
visible only zooming the graph. 

\begin{figure}[!h]
  \begin{subfigure}[b]{0.45\textwidth}
      \input{./nonmarkovnonlinearfunc.tex}
      \caption{Map \eqref{eq:nonmarkov}}
      \label{fig:nonmarkov}
  \end{subfigure}  
  \begin{subfigure}[b]{0.45\textwidth}
    \input{./nonmarkovnonlineardens.tex}
    \caption{The invariant measure for map \eqref{eq:nonmarkov}}
    \label{fig:nonmarkovinvariant}
  \end{subfigure}
\caption{Example \eqref{eq:nonmarkov}.}
\end{figure}

Below, some of the data (input and outputs) of our algorithm.
\begin{center}
\[
\begin{array}{llll}
\textrm{Inputs} & &\textrm{Outputs} &\\
\lambda & 1/3  &  N_{\varepsilon} & 14\\
B' & < 18.22 & N & 15\\
B  & 54.69   & l & 21 \\
\varepsilon & \leq 2.19\cdot 10^{-11} & \varepsilon_{\textrm{rig}}
& 0.004 \\
\varepsilon_{num} & \leq 0.0001 & L_{exp} & 1.219\pm 0.004  \\
\end{array}
\]
\end{center}

\subsection{A Manneville-Pomeau map}

In this section  we compute a density with small error in the $L^1$ norm, using the estimations developed in
Sections \ref{mann}.

The numerical part is essentially the same as the one used to compute the invariant measure in the $L^1$ case, the only big difference 
resides in the fact that to compute the Ulam approximation we used an algorithm based on an interval Newton root-finding algorithm, 
instead of using the exhaustion algorithm.

The example we have studied is
\begin{equation}\label{eq:mann}
T(x)=x+x^{1+\frac18} \quad \textrm{mod $1$,}
\end{equation}
whose graph is plotted in figure \ref{fig:mann_func}, using a discretization in $1048576$ elements.

The density of the invariant measure is plotted in
figure \ref{fig:mann_dens}.

\begin{center}
\[
\begin{array}{llll}
\textrm{Inputs} & &\textrm{Outputs} &\\
\alpha & 0.125  &  N_{\varepsilon} & 49\\
A_* & \leq 4.58 & N & 50\\
\varepsilon_{num} & \leq 0.001 & l & 88 \\
d& [0.52039,0.52040] & \varepsilon_{\textrm{rig}}
& 0.006   \\
\varepsilon &\leq 2.1\cdot 10^{-15}  & L_{exp} & 0.685\pm0.005  \\
\end{array}
\]
\end{center}

\begin{figure}[!h]
  \begin{subfigure}[b]{0.45\textwidth}
      \input{mann_0125_func.tex}
      \caption{Map \eqref{eq:mann}}
      \label{fig:mann_func}
  \end{subfigure}  
  \begin{subfigure}[b]{0.45\textwidth}
    \input{./mann_0125_dens.tex}
    \caption{The invariant measure for map \eqref{eq:mann}}
    \label{fig:mann_dens}
  \end{subfigure}
\caption{Example \eqref{eq:mann}.}
\end{figure}

\section{Numerical experiments ($L^\infty$ case)} \label{sec:NumExpHigher}
In this section  we compute a density with small error in the $L^\infty$ norm, using the estimations developed in
Sections \ref{sec:higher1} and \ref{sec:higher2}, and using the methods explained in the Subsections \ref{sec:HigherApprox}, \ref{subsec:numerr}, \ref{sec:estimaterig}.

\subsection{A Markov perturbation of $4\cdot x $ mod $1$}
The example we have studied is
\begin{equation}\label{eq:higher}
T(x)=4x+0.01\cdot \sin(8\pi x) \quad \textrm{mod $1$,}
\end{equation}
whose graph is plotted in figure \ref{fig:higher}, using a discretization in $131072$ elements.

The density of the invariant measure is plotted in
figure \ref{fig:higherinvariant}.

\begin{center}
\[
\begin{array}{llll}
\textrm{Inputs} & &\textrm{Outputs} &\\
\lambda & 0.27  &  N_{\varepsilon} & 2\\
B & \leq 0.62 & N & 3\\
B_1 &<1.8 & l & 10 \\
M  & 1.62 & \varepsilon_{\textrm{rig}}
& 0.004   \\
\alpha & \leq 0.44\cdot 10^{-11} & L_{exp} & 1.386\pm0.006  \\
\varepsilon_{num} & \leq 0.00001 & &  \\
(4 ||T''/(T')^2||_{\infty})/k^2 & \leq 4\cdot 10^{-10} & & \\
\end{array}
\]
\end{center}

\begin{figure}[!h]
  \begin{subfigure}[b]{0.45\textwidth}
      \input{./higher01func.tex}
      \caption{Map \eqref{eq:higher}}
      \label{fig:higher}
  \end{subfigure}  
  \begin{subfigure}[b]{0.45\textwidth}
    \input{./higher01dens.tex}
    \caption{The invariant measure for map \eqref{eq:higher}}
    \label{fig:higherinvariant}
  \end{subfigure}
\caption{Example \eqref{eq:higher}.}
\end{figure}

\section{Conclusion and directions}

We have seen a quite general strategy to obtain  rigorous computation of
invariant measures by a  fixed point stability   statement.
We showed  theoretical and practical details of the strategy
implementation on some classes of one dimensional maps.

We remark that since the estimation for the error is a posteriori and is applied to the
discretized operator, the algorithm can  work also in systems where the
spectral gap  is not present (the indifferent fixed point ones e.g.). What is needed, is an approximation estimation
to satisfy item a) of Theorem \ref{gen}  and the discretized system to contract the zero
average vectors fast enough to make the error small.

Next natural examples where to try the strategy are multidimensional piecewise hyperbolic systems.
Typically here there will be no absolutely continuous invariant
measure, but measures having fractal support. Some (quite complicated)
functional analytic framework (see \cite{LG}, \cite{BG} e.g.)  was proved to give nice spectral properties,
but an actual implementation seems to be computationally too complex. Here
probably the use of suitable simplified anisotropic norms will be useful,
but the implementation must be able to avoid the problems arising from the
bigger dimension of the space.

\end{document}